\newcommand{\stkout}[1]{\ifmmode\text{\sout{\ensuremath{#1}}}\else\sout{#1}\fi}
\newcounter{ADD}
\newcounter{DEL}
\def\ADDin{\addtocounter{ADD}{1}}
\def\ADDout{\addtocounter{ADD}{-1}}
\def\DELin{\addtocounter{DEL}{1}}
\def\DELout{\addtocounter{DEL}{-1}}
\newcommand{\ifdraft}[2]{}
  \renewcommand{\ifdraft}[2]{#1}}{%
  \renewcommand{\ifdraft}[2]{#2}}
\newcommand{\ifADD}[2]{\ifnum\value{ADD}>0{#1}\else{#2}\fi}
\newcommand{\ifDEL}[2]{\ifnum\value{DEL}>0{#1}\else{#2}\fi}
\newcommand{\delmarkup}[1]{%
  \DELin%
  \textcolor{red!80!black}{\stkout{#1}}%
  \DELout%
}
\newcommand{\addmarkup}[1]{%
  \ADDin%
  \ifDEL{}{%
    \textcolor{green!50!black}{#1}}
  \ADDout%
}
  \newenvironment{addedenv}{%
    \ADDin%
    \color{green!50!black}}
  {%
    \ADDout%
    \color{black}}
  \newcommand{\deleteX}[1]{%
    \DELin%
    {\color{red!80!black}{#1}}%
    \DELout}%
  \newcommand{\deleteX}[1]{}
\newcommand{\jump}[1]{\left\llbracket #1 \right\rrbracket}
\newcommand{\opext}{\mathcal{L}^{\operatorname{ext}}}
\newcommand{\scpL}[2]{\left( #1, #2 \right)_{L^2}}
\newcommand{\scpLscalar}[2]{\left( #1, #2 \right)_{L^2(\Omega)}}
\newcommand{\abso}[1]{\left\vert #1 \right\vert}
\newlist{todolist}{itemize}{2}
\setlist[todolist]{label=$\square$}
\newcommand{\CFLfac}{\nu}
\newcommand{\Th}{%
  \mathcal{{M}}_h%
}
\newcommand{\bfu}{\mathbf{u}}
\newcommand{\bfv}{\mathbf{v}}
\newcommand{\bfw}{\mathbf{w}}
\newcommand{\bff}{\mathbf{f}}
\newcommand{\bfA}{\mathbf{A}}
\newcommand{\bfq}{\mathbf{Q}}
\newcommand{\matrixbfK}{\mathbf{K}}
\newcommand{\matrixbfL}{\mathbf{L}}
\newcommand{\matrixbfR}{\mathbf{R}}
\newcommand{\matrixK}{K}
\newcommand{\matrixL}{L}
\newcommand{\matrixR}{R}
\newcommand{\Tone}{\mathbb{T}_1}
\newcommand{\Ttwo}{\mathbb{T}_2}
\newcommand{\Tthree}{\mathbb{T}_3}
\newcommand{\Tfour}{\mathbb{T}_4}
\newcommand{\Tfive}{\mathbb{T}_5}
\newcommand{\bflambda}{\mathbf{\Lambda}}
\newcommand{\bfIplus}{\mathbf{I}^+}
\newcommand{\bfIminus}{\mathbf{I}^-}
\newcommand{\kk}{k}
\newcommand{\Kone}{k_{1}}
\newcommand{\Ktwo}{k_{2}}
\newcommand{\Kcut}{\text{cut}}
\newcommand{\jj}{j}
\newcommand{\ubar}{\bar{u}}
\newcommand{\xjplus}{x_{\jj+\frac{1}{2}}^+}
\newcommand{\xjminus}{x_{\jj+\frac{1}{2}}^-}
\newcommand{\etaKone}{\eta_{\Kone}}
\newcommand{\numflux}[2]{\mathcal{H}(#1,#2)}
\newcommand{\numfluxwo}{\mathcal{H}}
\newcommand{\numfluxa}[2]{\mathcal{H}_a(#1,#2)}
\newcommand{\numfluxb}[2]{\mathcal{H}_b(#1,#2)}
\newcommand{\smax}{\lambda_{\max}}
\newcommand{\Source}{\mathcal{S}}
\newcommand{\Vhp}{\mathcal{V}_h^p}
\newcommand{\Iall}{\mathcal{I}_{\text{all}}}
\newcommand{\Iequi}{\mathcal{I}_{\text{equi}}}
\newcommand{\Ineigh}{\mathcal{I}_{\mathcal{N}}}
\newcommand{\tildem}{\tilde{m}}
\newtheorem{theorem}{Theorem}
\newtheorem{remark}{Remark}[section]
\newtheorem{notation}{Notation}[section]
\newtheorem{prerequisite}{Prerequisite}[section]
\newtheorem{definition}[theorem]{Definition}
\definecolor{mid2-gray}{gray}{0.55}
\title{DoD Stabilization for non-linear hyperbolic conservation laws on cut cell meshes in one dimension}
\author{Sandra May\thanks{Department of Mathematics, TU Dortmund University, Germany} \and Florian Streitb\"urger\footnotemark[1]}
\date{}
\begin{document}

\maketitle



  

%
%

%
%


\begin{abstract}
    In this work, we present the Domain of Dependence (DoD) stabilization for systems of hyperbolic conservation laws in one space dimension. The base scheme uses a method of lines approach consisting of a discontinuous Galerkin  scheme in space and an explicit strong stability preserving Runge-Kutta scheme in time. When applied on a cut cell mesh with a time step length that is appropriate for the size of the larger background cells, one encounters stability issues. The DoD stabilization consists of penalty terms that are designed to address these problems by redistributing mass between the inflow and outflow neighbors of small cut cells in a physical way. 
    For piecewise constant polynomials in space and explicit Euler in time, the stabilized scheme is monotone for scalar problems. For higher polynomial degrees $p$, our numerical experiments show convergence orders of $p+1$ for smooth flow and robust behavior in the presence of shocks.
\end{abstract}


\section{Introduction}

The efficient and fast generation of body-fitted meshes for complex geometries remains one of the most time-consuming preprocessing steps in numerical simulations involving finite volume (FV) and discontinuous Galerkin (DG) schemes. As a result, the usage of Cartesian embedded boundary meshes becomes more and more popular. 
Out of the different existing variants, we use the following approach: We simply cut the geometry out of an underlying Cartesian mesh, resulting in so called \textit{cut cells} along the boundary of the object. 

Cut cells are typically irregular and can become arbitrarily small. This causes various problems. In the context of solving hyperbolic conservation laws on cut cell meshes, for which one typically uses \textit{explicit} time stepping schemes, the most severe problem is the
so called \textit{small cell problem}: choosing the time step based on the size of the larger background cells results in stability problems on small cut cells and their neighbors. 
Therefore, special methods must be developed. The focus of this contribution is on addressing this problem.
For more information on the small cell problem we refer to \cite{BERGER2017,FVCA_May}.

The supposedly easiest approach to overcoming the small cell problem is \textit{cell merging} or \textit{cell agglomeration}
\cite{Krivodonova2013, Kummer2016,Quirk1994}: one simply merges cut cells that are too small with bigger neighbors. This approach is very intuitive but very difficult to do in three dimensions in a robust way and puts all the complexity back into the mesh generation process. 

The alternative is to develop algorithmic solutions to the small cell problem.
In the context of FV schemes, two well established approaches are the \textit{flux redistribution} method
\cite{Chern_Colella,Colella2006} and the \textit{h-box} method
\cite{Berger_Helzel_2012,Berger_Helzel_Leveque_2002}. 
More recent approaches include a \textit{dimensionally split} flux stabilization \cite{Klein_cutcell_3d,Klein_cutcell},
the \textit{mixed explicit implicit} scheme
\cite{May_Berger_explimpl}, the extension of the \textit{active flux} method to cut cells \cite{FVCA_Helzel_Kerkmann}, and the \textit{state redistribution} method \cite{Berger_Giuliani_2021}.

In the context of DG schemes there exists only very little work addressing the small cell problem.
While there are many different approaches for stabilizing discretizations for elliptic and parabolic problems on cut cell meshes (for an overview see, e.g., \cite{OverviewUCLWorkshop}) the research for hyperbolic problems
is still at the beginning but with a lot of current activity. 
Some very recent work \cite{Massing2018,Kreiss_Fu,Sticko_Kreiss} is based on applying the ghost penalty stabilization \cite{Burman2010}, which is a well-known approach for elliptic equations, to hyperbolic problems. 
Out of these contributions, only Fu and Kreiss \cite{Kreiss_Fu} address the small cell problem for first-order hyperbolic problems
by developing a stabilization for the solution of scalar conservation laws in one dimension.
A different approach to overcoming the small cell problem was taken by Giuliani \cite{Giuliani_DG} who extends the state redistribution scheme
 to the DG setting. This approach seems to work well in practice but it is challenging to verify theoretical properties.

 In \cite{DoD_SIAM_2020}, we introduced together with Engwer and N\"u{\ss}ing the \textit{Domain of Dependence} (DoD) stabilization. To the best of our knowledge, this is the first contribution to overcoming the small cell problem in a DG setting in a monotone way.
 The DoD stabilization introduces penalty terms that shift mass between small cut cells and their neighbors in a physical way: within one time step, mass is transported from the \textit{inflow} neighbors of small cut cells \textit{through} the cut cells to their \textit{outflow} neighbors. This way we restore the proper domain of dependence of the outflow neighbors and create a stable update on small cut cells for standard explicit time stepping.

 The work in \cite{DoD_SIAM_2020} treats the case of linear advection for piecewise linear polynomials. In this contribution we take the next step and extend the stabilization to higher order polynomials and to non-linear systems of hyperbolic conservation laws in one dimension, in particular to the compressible Euler equations. 
 For the extension to higher order polynomials we observed that it is not sufficient to penalize derivatives only on small cut cells. We therefore added terms to control derivatives on their neighbors as well.
 For the extension to non-linear systems, the main challenge consisted in accounting for the various flow directions.
 
 For scalar conservation laws, our extended formulation has the following theoretical properties:
 for piecewise constant polynomials in space combined with explicit Euler in time, the resulting scheme is monotone, independent of the size of the small cut cell; thus, this result transfers from the linear to the non-linear case. For the semi-discrete setting, there holds $L^2$ stability for arbitrary polynomial degrees $p$ as a result of also controlling derivatives on cut cells' neighbors. Our numerical results for scalar equations and systems show 
 convergence rates of
$p+1$ for polynomials of degree $p$ for smooth solutions and robust behavior for problems involving shocks.

 The paper is structured as follows: we will first provide in section \ref{sec: setting} the general setting, which includes the cut cell model problem and the unstabilized DG discretization. In section \ref{sec: DoD stab}, we will present the DoD stabilization for non-linear problems and higher order polynomials. We will also give a short comparison between the new formulation
 and the formulation in \cite{DoD_SIAM_2020} for the case of the advection equation.  Section \ref{sec: theoretical results} contains theoretical results for scalar conservation laws, like the monotonicity property and the $L^2$ stability result for the semi-discrete formulation.
 Finally, in section \ref{sec: numerical results} we will present numerical results for scalar equations and systems of conservation laws to support our theoretical findings. We will conclude with an outlook in section \ref{sec: outlook}.

\section{Setting}\label{sec: setting}
We consider time-dependent systems of hyperbolic conservation laws in one space dimension of the form 
\begin{equation}\label{eq: conservation law}
\bfu_t+ \bff(\bfu)_x = \mathbf{0}\quad \text{ in } \Omega \times (0,T)
\end{equation}
with initial data $\bfu_0=\bfu(\cdot,0)$.
The spatial domain is given by $\Omega = (x_L,x_R)$ with $x_L,x_R \in \mathbb{R}, x_L < x_R$, and the final time is given by $T\in \mathbb{R_+}$. Further, $\bfu	: \Omega\times(0,T) \rightarrow \mathbb{R}^m$, $m \in \mathbb{N}$, is the vector of conserved variables and $\bff:\mathbb{R}^m\rightarrow\mathbb{R}^m$ is the flux function. We assume the system to be hyperbolic, i.e., that the Jacobian $\bff_{\bfu}(\bfu)$ is diagonalizable with real eigenvalues for each physically relevant value $\bfu \in \mathbb{R}^m$, compare LeVeque \cite{Leveque02}.

In particular, we will consider the compressible Euler equations,
which satisfy \eqref{eq: conservation law} with
\begin{equation}\label{eq: Euler equations}
    \bfu = \begin{pmatrix}
    \rho \\ \rho v \\ E
    \end{pmatrix}
    \quad \text{and} \quad
    \bff(\bfu) = \begin{pmatrix} \rho v \\ \rho v^2 + p \\ (E + p)v  \end{pmatrix}.
\end{equation}
Here, $\rho$ denotes the density, $v$ the velocity, $p$ the pressure, and $E$ the energy. 
The system is completed by the equation of state 
\begin{equation*}
E = \frac{p}{\gamma-1} + \frac 1 2 \rho v^2.
\end{equation*}
We will set $\gamma=1.4$ in our numerical tests. 
We will also consider linear systems given by
\begin{equation}\label{eq: lin system}
\bfu + \bfA \bfu_x = \mathbf{0},    
\end{equation}
with the matrix $\bfA \in \mathbb{R}^m \times \mathbb{R}^m$ being 
diagonalizable with real eigenvalues.

For the theoretical results, we will focus on scalar conservation laws 
\begin{equation}\label{eq: scalar cons law}
u_t + f(u)_x = 0.
\end{equation}
Important representatives include
the linear advection equation given by
\begin{equation}\label{eq: lin adv}
    u_t + \beta u_x = 0, \quad \beta > 0 \text{ constant,}
\end{equation}
and 
Burgers equation given by
\begin{equation*}
u_t + f(u)_x = 0, \quad f(u) = \frac 1 2 u^2.    
\end{equation*}

\subsection{The cut cell model problem}\label{sec: model problem}

To examine the behavior of solving \eqref{eq: conservation law} on a cut cell mesh, we create a model problem:
We first discretize $\Omega$ in $N$ cells $I_j = (x_{j-\frac{1}{2}},x_{j+\frac{1}{2}}), j=1\ldots,N,$ of equal length $h = \frac{x_R - x_L}{N}$. 
Then we take one cell, the cell $I_k$, in the interior of the domain and split it into two cut cells, $I_{\Kone}$ and $I_{\Ktwo}$, of lengths $\alpha h$ and $(1-\alpha)h$ with $\alpha\in (0,\frac{1}{2}]$. 
This way we obtain a one dimensional cut cell mesh shown in figure \ref{fig: model problem} with $N+1$ cells, which we will refer to as $\Th$; compare also \cite{DoD_SIAM_2020}. 

 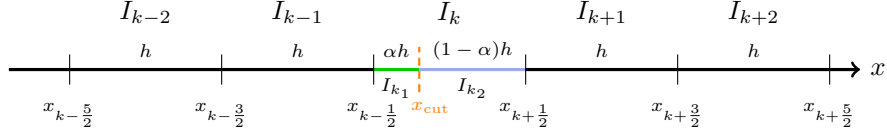
\begin{figure}[ht]
  \begin{center}
\begin{tikzpicture}[
axis/.style={very thick, line join=miter, ->}]
\draw [very thick] (-4.8,0) -- (0,0);
\draw [green!80!black,very thick] (0,0) -- (0.6,0);
\draw [blue!80!green!40!white,very thick] (0.6,0) -- (2,0);
\draw [axis] (2,0) -- (6.4,0) node(xline)[right] {$x$};
\draw (-4,-0.2) -- (-4,0.2);
\draw (-2,-0.2) -- (-2,0.2);
\draw ( 0,-0.2) -- ( 0,0.2);
\draw ( 2,-0.2) -- ( 2,0.2);
\draw ( 4,-0.2) -- ( 4,0.2);
\draw ( 6,-0.2) -- ( 6,0.2);
\draw[color=orange,densely dashed, thick] (0.6,-0.3) -- (0.6,0.3);
\node[] at (-3,0.75) {$I_{k-2}$};
\node[] at (-1,0.75) {$I_{k-1}$};
\node[] at (1 ,0.75) {$I_{k}$};
\node[] at (3 ,0.75) {$I_{k+1}$};
\node[] at (5 ,0.75) {$I_{k+2}$};
\scriptsize
\node[] at (.3 ,-0.25) {$I_{k_1}$};
\node[] at (1.3 ,-0.25) {$I_{k_2}$};
\node[] at ( -3,0.25) {$h$};
\node[] at ( -1,0.25) {$h$};
\node[] at (0.3,0.25) {$\alpha h$};
\node[] at (1.3,0.25) {$(1-\alpha) h$};
\node[] at (  3,0.25) {$h$};
\node[] at (  5,0.25) {$h$};
\node[] at (-4,-0.6) {$x_{k-\tfrac{5}{2}}$};
\node[] at (-2,-0.6) {$x_{k-\tfrac{3}{2}}$};
\node[] at ( 0,-0.6) {$x_{k-\tfrac{1}{2}}$};
\node[] at ( 2,-0.6) {$x_{k+\tfrac{1}{2}}$};
\node[] at ( 4,-0.6) {$x_{k+\tfrac{3}{2}}$};
\node[] at ( 6,-0.6) {$x_{k+\tfrac{5}{2}}$};
\node[color=orange] at (.75,-0.5) {$x_{\Kcut}$};
\end{tikzpicture}
  \caption{Cut cell mesh $\Th$: equidistant mesh with cell $I_{\kk}$
    split into two cells of lengths $\alpha h$ and $(1-\alpha) h$ with
    $\alpha \in (0,\frac{1}{2}]$. We denote the new edge coordinate by $x_{\text{cut}}$.}
  \label{fig: model problem}
  \end{center}
\end{figure}
\begin{definition}
For the model problem $\mathcal{M}_h$, we define the following index sets
\begin{equation}
    \Iequi = \{ 1 \le j \le N \lvert j \neq k \}, \:
     \Iall = \Iequi \cup \{ \Kone, \Ktwo \}, \:
     \Ineigh = \{k-1,\Kone,\Ktwo \}.
\end{equation}
\end{definition}
Here, $\Iequi$ contains the indices of all cells of length $h$, and $\Ineigh$ contains the indices of the small cut cell $I_{\Kone}$ and its left and right neighbor.

We will use this model problem for explaining our stabilization and for the theoretical results in section \ref{sec: theoretical results}. For the numerical results in section \ref{sec: numerical results}, we will build different test cases upon this model problem, which use many cut cell pairs.

\subsection{Unstabilized RKDG scheme}

We use a Runge-Kutta DG (RKDG) approach. We first discretize
in space using a DG approach. Then we discretize in time using an \textit{explicit} 
strong stability preserving (SSP) RK scheme \cite{GottliebShu, Kraaijevanger1991}.

\begin{definition}[Discrete Function Space]
We define the discrete space $\Vhp \subset (L^2(\Omega))^m$ by
\begin{equation*}
   \Vhp = \left\{ \bfv^h \in (L^2(\Omega))^m \ \vline \  \bfv^h_l{\vert_{I_j}} \in P^p(I_j)  \right.
   \left. \text{for each component }l=1,\ldots,m \text{ and for all}\ j\in \Iall \right\},
\end{equation*}
where $P^p$ denotes the polynomial space of degree $p$. 
\end{definition}

As functions $\bfv^h\in \Vhp$ are not well-defined on cell edges, we define jumps. 

\begin{definition}[Jump]
Using the notation $x_{j+\frac{1}{2}}^{\pm} = \lim_{\varepsilon \to 0} x_{j+\frac{1}{2}} \pm \varepsilon$ we define the jump at an interior edge $x_{j+\frac{1}{2}}, 1\le j \le N-1,$ as 
\begin{equation*}
\jump{\bfv^h}_{\jj+\frac{1}{2}} = \bfv^h(\xjminus) - \bfv^h(\xjplus).
\end{equation*}
Analogously, we define $\jump{\bfv^h}_{\text{cut}} = \bfv^h(x_{\text{cut}}^-) - \bfv^h(x_{\text{cut}}^+)$.
At the boundary edges $x_{\frac 1 2}$ and $x_{N+\frac 1 2}$ we define
\begin{equation*}
    \jump{\bfv^h}_{\frac 1 2} = -\bfv^h(x_{\frac 1 2}^+) \quad \text{and} \quad \jump{\bfv^h}_{N+\frac 1 2} = \bfv^h(x_{N+\frac 1 2}^-).
\end{equation*}
\end{definition}

Our stabilization is based on extending the influence of the polynomial solutions
on cells $I_{k-1}$ and $I_{\Ktwo}$ into the small cut cell $I_{\Kone}$.
We therefore introduce an extension operator,
compare \cite{DoD_SIAM_2020}.

\begin{definition}[extension operator]\label{def: extr op}
The extension operator $\opext_j$ extends the function $\bfu^h \in \Vhp$ from a cell $I_j, j\in \Iall,$ to the whole domain $\Omega$:
\begin{equation*}
    \opext_{j}: \Vhp|_{I_j} \rightarrow P^p(\Omega) \quad
  \text{s.t. } \opext_{j}(\bfu^h) \in P^p(\Omega)
  \text{ and } \opext_{j}(\bfu^h)|_{I_j} = \bfu^h|_{I_j}.
\end{equation*}
This extension is simply given by evaluating the polynomials $\bfu_l^h|_{I_j} \in P^p(I_j), l=1\ldots,m,$ outside of their original support.
\end{definition}

\begin{notation}
In the following, we will often use the shortcut notation
\begin{equation*}
    \bfu_j(x) = \opext_{j}(\bfu^h)(x), \quad x \in \Omega,
\end{equation*}
which corresponds to evaluating the discrete polynomial function from cell $j$ at a point $x$, possibly outside of $I_j$.
If necessary, we will use the subindex $l$ to denote the $l^{\text{th}}$ component. Therefore, $\bfu_{j,l}(x)$ corresponds to
the $l^{\text{th}}$ component of $\opext_{j}(\bfu^h)(x)$.
Using this notation, one can equivalently express the jump as
\begin{equation*}
\jump{\bfv^h}_{\jj+\frac{1}{2}} = \bfv_j(x_{j+\frac{1}{2}}) - \bfv_{j+1}(x_{j+\frac{1}{2}}).
\end{equation*}
\end{notation}

We now introduce the standard, unstabilized DG scheme for system \eqref{eq: conservation law}. 
The DoD stabilization, which 
will make it possible to use explicit time stepping despite the presence of the small cut cell $I_{\Kone}$,
will be introduced in section \ref{sec: DoD stab}.
The semi-discrete problem for the mesh $\Th$ is given by: Find $\bfu^h \in  \Vhp$ such that
\begin{equation}\label{eq: scheme 1d wo stab}
  \scpL{d_t\bfu^h(t)}{\bfw^h}+a_h\left(\bfu^h(t), \bfw^h\right)= 0\quad\forall \, \bfw^h\in \Vhp,
\end{equation}
with 
\begin{multline*}
    a_h (\bfu^h,\bfw^h) = -\sum_{\jj \in \Iall} \int_{\jj} \bff(\bfu^h)\cdot \partial_x \bfw^h\dd{x}\\
    + 	\sum_{\jj=0}^{N} \numflux{\bfu_j}{\bfu_{j+1}}(x_{\jj+\frac{1}{2}})\cdot\jump{\bfw^h}_{\jj+\frac{1}{2}} +  \numflux{\bfu_{\Kone}}{\bfu_{\Ktwo}}(x_{\text{cut}})\cdot\jump{\bfw^h}_{\Kcut}.
 \end{multline*}

  Here, $\mathbf{a}\cdot\mathbf{b}$ denotes the standard scalar product in $\mathbb{R}^m$ given by $\mathbf{a}\cdot\mathbf{b} = \sum_{l=1}^m \mathbf{a}_l \mathbf{b}_l$ and $(\cdot,\cdot)_{L^2}$ denotes the standard scalar product in $(L^2(\Omega))^m$. Further, $\numflux{\mathbf{a}}{\mathbf{b}}(x)$ denotes the numerical flux function with arguments $\mathbf{a}(x)$ and $\mathbf{b}(x)$.
Finally, we incorporate boundary conditions by suitably defining $\bfu_0(x_{\frac 1 2})$ and $\bfu_{N+1}(x_{N+\frac 1 2})$ in   
$\numflux{\bfu_0}{\bfu_{1}}(x_{\frac{1}{2}})$ and in $\numflux{\bfu_{N}}{\bfu_{N+1}}(x_{N+\frac{1}{2}})$, respectively.
The choices of $\bfu_0(x_{\frac 1 2})$ and $\bfu_{N+1}(x_{N+\frac 1 2})$ will be discussed in section \ref{sec: numerical results}.

\begin{notation}
In formulae we typically refer to the $j^{\text{th}}$ cell $I_j$
by using only the letter $`j'$ for brevity, i.e., $\int_j$ corresponds to $\int_{I_j}.$
\end{notation}

\section{DoD stabilization}\label{sec: DoD stab}
To handle the small cell problem, we suggest an algebraic approach, which adds special stabilization terms, summarized in $J_h$, to the semi-discrete formulation \eqref{eq: scheme 1d wo stab}.  The resulting DoD stabilized scheme is then given by: Find $\bfu_h \in \Vhp$ such that 
   \begin{equation}\label{eq: stab. scheme}
     \scpL{d_t\bfu^h(t)}{\bfw^h}+a_h(\bfu^h(t), \bfw^h) +  J_h(\bfu^h(t),\bfw^h)
     = 0 \quad\forall\, \bfw^h\in \Vhp.
     \end{equation}
The penalty term $J_h$ is linear in the test function $\bfw^h$ and in general non-linear in the solution $\bfu^h(t)$.

\subsection{General structure of the penalty term \texorpdfstring{$J_h$}{J\_h}}
We only stabilize the smaller cut cell $I_{\Kone}$ in the model mesh $\Th$. Therefore, the stabilization is given by
\begin{equation*}
J_h(\bfu^h,\bfw^h) = J_h^{0,\Kone}(\bfu^h,\bfw^h) + J_h^{1,\Kone}(\bfu^h,\bfw^h)
\end{equation*}
with
\begin{align}\label{eq: def J0}
\begin{split}
    J_h^{0,\Kone}(\bfu^h,\bfw^h) &= \etaKone \left[\numflux{\bfu_{k-1}}{\bfu_{\Ktwo}} (x_{k-\frac{1}{2}})- \numflux{\bfu_{k-1}}{\bfu_{\Kone}}(x_{k-\frac{1}{2}})\right]\cdot\jump{\bfw^h}_{k-\frac{1}{2}} \\
&+ \etaKone \left[\numflux{\bfu_{k-1}}{\bfu_{\Ktwo}}(x_\Kcut)-\numflux{\bfu_{\Kone}}{\bfu_{\Ktwo}}(x_\Kcut)\right]\cdot\jump{\bfw^h}_\Kcut
\end{split}
\end{align}
and $J_h^{1,\Kone}$ being defined below. 
Here, $\etaKone \in \mathbb{R_+}$ is a penalty factor.
The stabilization term $J_h^{0,\Kone}$ is designed to properly redistribute mass \textit{between} the cells $I_{k-1},$ $I_{\Kone},$ and $I_{\Ktwo}.$ 
We achieve this by adding new fluxes at $x_{k-1/2}$ and $x_{\text{cut}}$, which move mass between the left neighbor $I_{k-1}$ and the small cut cell $I_{\Kone}$ and between $I_{\Kone}$ and the right neighbor $I_{\Ktwo}$, respectively. The sizes of these fluxes depend on the flux differences of a newly introduced flux $\numflux{\bfu_{k-1}}{\bfu_{\Ktwo}}(\cdot)$ and the standard fluxes $ \numflux{\bfu_{k-1}}{\bfu_{\Kone}}(\cdot)$ and $ \numflux{\bfu_{\Kone}}{\bfu_{\Ktwo}}(\cdot)$, respectively. Note that the new flux $\numflux{\bfu_{k-1}}{\bfu_{\Ktwo}}(\cdot)$ introduces a direct coupling between cells $I_{k-1}$ and $I_{\Ktwo}$.

We emphasize the symmetric structure of the two terms in $J_h^{0,\Kone}$: we add jump terms at both edges of $I_{\Kone}$, accounting for the two possible flow directions.
Note that we make use of the extrapolation operator $\opext$ here when we evaluate $\bfu_{k-1}$ and $\bfu_{\Ktwo}$ at $x_{\text{cut}}$ and $x_{k-\frac 1 2}$, respectively.

The stabilization term $J_h^{1,\Kone}$ controls the mass distribution primarily \textit{within} the small cut cell $I_{\Kone}$ and secondarily \textit{within} its neighbors $I_{k-1}$ and $I_{\Ktwo}$.
The stabilization accounts for how much mass has been moved into and out of the small cut cell $I_{\Kone}$ from and to its left and right neighbors by means of $a_h$ and $J_h^{0,\Kone}$.
The terms are derived from the
proof of the $L^2$ stability, compare Theorem \ref{theorem: l2 stability}. 
Analogously to the ansatz functions, we also extrapolate the test functions to be used within their direct neighbor but outside of their original support. The stabilization term $J_h^{1,\Kone}$ is given by
\begin{align}\label{eq: def J1}
\begin{split}
J_h^{1,\Kone}(\bfu^h,\bfw^h) &= 
   \etaKone\sum_{j\in\Ineigh} \matrixbfK(j)\int_{\Kone}\left(\numflux{\bfu_{k-1}}{\bfu_{\Ktwo}}-\bff(\bfu_j)\right)\cdot\partial_x \bfw_{j}\dd x\\
   &+\etaKone\sum_{j\in \Ineigh} \matrixbfK(j)\int_{\Kone}\left(\numfluxa{\bfu_{k-1}}{\bfu_{\Ktwo}}\bfu_j\right)\cdot\partial_x \bfw_{k-1}\dd x\\
   &+\etaKone\sum_{j\in \Ineigh} \matrixbfK(j)\int_{\Kone}\left(\numfluxb{\bfu_{k-1}}{\bfu_{\Ktwo}}\bfu_j\right)\cdot\partial_x \bfw_{\Ktwo}\dd x.
   \end{split}
\end{align}
Here, the matrices
$\matrixbfK(j) \in \mathbb{R}^{m \times m}, j \in \Ineigh,$ incorporate information about the flow directions. They are defined using positive semi-definite matrices
$\matrixbfL_{\Kone},\matrixbfR_{\Kone} \in \mathbb{R}^{m \times m}$ and the identity matrix 
$\mathbf{I}^m \in \mathbb{R}^{m \times m}$.
We set
\begin{equation*}
\matrixbfK(k-1)=\matrixbfL_{\Kone}, \quad \matrixbfK(\Kone)=-\mathbf{I}^m, \quad \text{and} \quad \matrixbfK(\Ktwo)=\matrixbfR_{\Kone}.
\end{equation*}
The choices of $\matrixbfL_{\Kone},\matrixbfR_{\Kone}, \text{and } \etaKone$ will be discussed below.

Further, $\numfluxa{\bfu^-}{\bfu^+} \in \mathbb{R}^{m \times m}$ denotes the Jacobian of the numerical flux $\numflux{\bfu^-}{\bfu^+}$ with respect to the first argument, i.e., $\left(\frac{\partial}{\partial (\bfu^-)_j} \numflux{\bfu^-}{\bfu^+}_i\right)_{i,j=1}^m$. Analogously, $\numfluxb{\bfu^-}{\bfu^+}$ denotes the Jacobian with respect to the second argument $\bfu^+$.

The stabilization might seem a bit overwhelming. Below, we will examine the stabilization for the two special cases of linear advection for $P^p$ and of scalar conservation laws for $P^0$ in more detail. This will provide a better understanding. 

\begin{remark}
We note that the stabilized DG scheme is {\em locally mass conservative} but that the local mass conservation must be understood in a slightly broader sense: when checking for mass conservation (by testing with indicator functions), the penalty term $J_h^{1,\Kone}$ vanishes. The penalty term $J_{h}^{0,\Kone}$ stays and (depending on the flow direction) connects the cells $I_{k-1}$, $I_{\Kone}$, and $I_{\Ktwo}$. This is intended to overcome the small cell problem. As a result, we have local mass conservation with respect to the extended control volume $I_{k-1} \cup I_{\Kone} \cup I_{\Ktwo}$. 
\end{remark}

\subsection{Choice of parameters}

We now discuss how to choose $\matrixbfL_{\Kone}$,  $\matrixbfR_{\Kone}$, and $\etaKone$.

\subsubsection{Choice of \texorpdfstring{$\matrixbfL_{\Kone}$}{L\_k} and \texorpdfstring{$\matrixbfR_{\Kone}$}{R\_k}}
The parameter matrices $\matrixbfL_{\Kone}$ and $\matrixbfR_{\Kone}$ incorporate information about the flow direction. 
Let us first consider \textit{linear} problems. 
For the scalar linear advection equation \eqref{eq: lin adv} with $\beta>0$, we set $\matrixL_{\Kone}=1$ and $\matrixR_{\Kone}=0.$
For linear systems, given by \eqref{eq: lin system}, we decompose the matrix $\bfA$. 
Thanks to the assumption of hyperbolicity, $\bfA$ is diagonalizable with real eigenvalues
$\lambda_{i}, i=1,\ldots,m,$. Therefore, we can rewrite 
$
\bfA = \bfq\bflambda \bfq^{-1},
$
with the columns of $\bfq$ containing the right eigenvectors of $\bfA$ and $\bflambda$ being a diagonal matrix containing the eigenvalues $(\lambda_i)_i$. 
Based on $\bflambda$, we define the diagonal matrices $\bfIplus, \bfIminus \in \mathbb{R}^{m \times m}$
by choosing element-wise for $i=1,\ldots,m$
\begin{equation*}
    \bfIplus_{ii} = \begin{cases}
      1 & \text{if } \bflambda_{ii} > 0,\\
      \frac{1}{2} & \text{if } \bflambda_{ii} = 0,\\
      0 & \text{if } \bflambda_{ii} < 0,
    \end{cases}
    \quad \text{and} \quad
    \bfIminus_{ii} = \begin{cases}
      0 & \text{if } \bflambda_{ii} > 0,\\
      \frac{1}{2} & \text{if } \bflambda_{ii} = 0,\\
      1 & \text{if } \bflambda_{ii} < 0.
    \end{cases}    
\end{equation*}
Then, we define
\begin{equation}\label{eq: choice tau kappa system}
    \matrixbfL_{\Kone} = \bfq\bfIplus \bfq^{-1} \quad \text{and} \quad \matrixbfR_{\Kone} = \bfq\bfIminus \bfq^{-1}.
\end{equation}
Note that $\matrixbfL_{\Kone}$ and $\matrixbfR_{\Kone}$ are positive semi-definite matrices, which satisfy
$\matrixbfL_{\Kone} + \matrixbfR_{\Kone} = \mathbf{I}^m$.

For \textit{non-linear} problems, we use the same approach but replace $\bfA$ by the (non-linear) Jacobian matrix $\bff_{\bfu}(\bfu)$,
evaluated at a suitable average $\hat{\bfu}$ of $\bfu_{k-1}(x_{\Kone})$ and $\bfu_{\Ktwo}(x_{\Kone})$, with $x_{\Kone}$ denoting the cell centroid of cell $I_{\Kone}$.
For scalar problems, i.e., $m=1$, we simply use the arithmetic average
$\hat{u} = (u_{k-1}(x_{\Kone})+u_{\Ktwo}(x_{\Kone}))/2$ and set
\begin{equation*}
(\matrixL_{\Kone},\matrixR_{\Kone}) = 
    \begin{cases}
     (1,0) & \text{if } \hat{u} > 0, \\
     (\frac{1}{2},\frac{1}{2}) & \text{if } \hat{u} = 0, \\
     (0,1) & \text{if } \hat{u} < 0.
    \end{cases}
\end{equation*}
For solving the compressible Euler equations, compare \eqref{eq: Euler equations}, we use the Roe average given by
\begin{equation*}
\hat{\bfu}(\bfu_{k-1},\bfu_{\Ktwo}) =\frac{1}{2} \begin{pmatrix}
\sqrt{\rho_{k-1}}+\sqrt{\rho_{\Ktwo}}\\
\sqrt{\rho_{k-1}}v_{k-1}+\sqrt{\rho_{\Ktwo}}v_{\Ktwo}\\
\sqrt{\rho_{k-1}}H_{k-1}+\sqrt{\rho_{\Ktwo}}H_{\Ktwo}
\end{pmatrix}
\end{equation*}
with $H=\frac{E+p}{\rho}$ and with dropping the evaluation point $x_{\Kone}$ for brevity. Then, we decompose $\bff_{\bfu}(\hat{\bfu})$ into $\bfq\bflambda \bfq^{-1}$ and use again the definition \eqref{eq: choice tau kappa system}.

\subsubsection{Choice of \texorpdfstring{$\etaKone$}{eta\_k1}}
We choose the stabilization parameter $\etaKone$ as
\begin{equation}\label{eq: eta}
\etaKone = \max \left( 1-\frac{\alpha}{\CFLfac}, 0 \right)
\end{equation}
with $\alpha$ being the cut cell fraction and $\CFLfac$ the CFL parameter. The CFL parameter is used for setting the time step $\Delta t$. 
We use the standard formula for computing the time step length for DG schemes given by
\begin{equation}\label{eq: time step}
\Delta t = \frac{1}{2p+1} \frac{\CFLfac h}{\smax}
\end{equation}
with $    \smax = \max_i \abs{\lambda_i} $
 being the maximum eigenvalue.

Examining \eqref{eq: eta}, we observe that for $\alpha \ge \CFLfac$ there holds $\etaKone = 0$ and therefore the stabilization $J_h$ vanishes. This is intended as in this case the standard CFL condition on cell $I_{\Kone}$ is satisfied and we do not have a small cell problem. 
In the following, we typically implicitly assume $\alpha < \nu$, in which case there holds $\etaKone = 1- \frac{\alpha}{\CFLfac} > 0$.

\begin{remark}
There is a certain (limited) flexibility in the choice of $\etaKone$. For a more detailed discussion we refer to
\cite{DoD_SIAM_2020,Enumath_proceedings}.
\end{remark}

\subsection{Effect of additional stabilization terms in \texorpdfstring{$J_h^{1,\Kone}$}{Jh1}}

We now briefly discuss our new formulation for the case of the linear advection equation and compare it to the
formulation used in \cite{DoD_SIAM_2020}, where we presented the DoD stabilization for the advection equation for piecewise \textit{linear} polynomials. We start with examining $J_h^{0,\Kone}$ as formulated in \eqref{eq: def J0}. When using an upwind flux, the first term simply cancels and the second term reduces to the formulation of $J_h^{0,\Kone}$ used in \cite{DoD_SIAM_2020}; thus, the two formulations coincide for $J_h^{0,\Kone}$.
This is not the case for $J_h^{1,\Kone}$. Compared to \cite{DoD_SIAM_2020}, we have added terms to stabilize the mass distribution within the cells $I_{k-1}, I_{\Kone}$, and $I_{\Ktwo}$ for higher order polynomials. We discuss this in more detail in the following.

For solving the linear advection equation \eqref{eq: lin adv} with the upwind flux, the derivatives of
the numerical flux are given by
\begin{equation*}
\numfluxa{u_a}{u_b} = \beta \quad \text{and} \quad \numfluxb{u_a}{u_b} = 0,
\end{equation*}
and the coefficients $\matrixL_k$ and $\matrixR_k$ reduce to $\matrixL_k = 1$ and $\matrixR_k = 0$. Thus, the stabilization is of the form 

\begin{align}\label{eq: stabilization linear advection}
\begin{split}
J_h(u^h,w^h) = & \beta \etaKone\left[u_{k-1}(x_\Kcut)-u_{\Kone}(x_\Kcut)\right]\jump{w_h}_\Kcut \\
   &+\beta \etaKone\int_{\Kone} \left[u_{k-1}(x)-u_{\Kone}(x)\right]\left[\partial_x w_{k-1}(x)-\partial_x w_{\Kone}(x)\right] \dd{x}.
   \end{split}
\end{align}
The stabilization suggested in \cite{DoD_SIAM_2020} for the same setting has the form
\begin{align}\label{eq: stab lin adv old}
\begin{split}
J_h(u^h,w^h) = & \beta \etaKone\left[u_{k-1}(x_\Kcut)-u_{\Kone}(x_\Kcut)\right]\jump{w_h}_\Kcut \\
   &\quad -\beta \etaKone\int_{\Kone} \left[u_{k-1}(x)-u_{\Kone}(x)\right]\partial_x w_{\Kone}(x) \dd{x}.
   \end{split}
\end{align}
Therefore, the only but essential difference is the expression 
\begin{equation}\label{eq: difference term siam}
\int_{\Kone}\beta\etaKone \left[u_{k-1}(x)-u_{\Kone}(x)\right]\partial_x w_{k-1}(x)\dd x.
\end{equation}

To examine the effect of the additional term \eqref{eq: difference term siam}, especially for higher polynomial degrees, we study the eigenvalues of the semi-discrete system
\begin{equation*}
U_t = M^{-1}L U. 
\end{equation*}
Here, we denote by $M$ the mass matrix, by $L$ the stabilized stiffness matrix, and by $U$ the coefficient vector of $u_h$. 
We use a modified version of our model problem $\Th$ here: we discretize the domain $(0,1)$ by 100 equidistant cells and then split all cells in $(0.1,0.9)$ in cut cell pairs of length $\alpha h$ and $(1-\alpha)h$. All cells of length $\alpha h$ are identified as cells of type $I_{\Kone}$ and are stabilized. We compare the results for $\alpha=10^{-1}$ with the results for $\alpha=10^{-6}$.

\begin{table}[t]
\centering
\begin{tabular}{|c|c|c|c|c|}
\hline
& \multicolumn{2}{|c|}{$\alpha = 10^{-1}$} & \multicolumn{2}{|c|}{$\alpha = 10^{-6}$}\\
\hline
P$^p$ &  without \eqref{eq: difference term siam} & with \eqref{eq: difference term siam} 
& without \eqref{eq: difference term siam} & with \eqref{eq: difference term siam}   \\ \hline
P$^1$ & -4.81e-16 & 4.38e-17 & -1.58e-17  &  8.40e-17 \\
P$^2$ &  2.51e-04 & -1.73e-15 & 1.10e-15 & -6.53e-16 \\
P$^3$ & 5.11e-03 & 2.50e-15  & 3.84e-17  & 2.46e-16 \\
\hline
\end{tabular}
\caption{Effect of adding the term \eqref{eq: difference term siam}: Comparison of the spectral abscissa for the modified model problem for $\alpha = 10^{-1}$ and $\alpha=10^{-6}$. }\label{Table: spectral abscissa comparison}
\end{table}

In table \ref{Table: spectral abscissa comparison} we show the spectral abscissa of $M^{-1}L$ for the two different stabilizations for polynomial degrees $p=1,2$, $3$. The spectral abscissa $\mu$ is defined as 
the supremum over the real parts of all eigenvalues $\hat{\lambda}_i$ of $M^{-1}L$, i.e., $\mu = \sup_{i}( \text{Re}(\hat{\lambda}_i))$. It is a good indicator for the stability of a semi-discrete system, compare \cite{Mitchell2020,Trefethen2005}. We need to prevent $\mu>0$.

Usually, the tiny cut cells are the trouble-makers. 
For $\alpha=10^{-6}$ though all values in table \ref{Table: spectral abscissa comparison} are zero
(within the range of machine precision) and therefore fine. 
So one might think that the formulation \eqref{eq: stab lin adv old}, which we introduced in \cite{DoD_SIAM_2020} for linear polynomials only, also works for higher order polynomials.

Surprisingly though we have problems for the `big' cut cells with
volume fraction $\alpha = 10^{-1}$. For $P^1$ the values look good for both formulations. For $P^2$ and $P^3$ however
the formulation without the term \eqref{eq: difference term siam} shows values for $\mu$ of the order of $10^{-4}$ and $10^{-3}$, i.e., a clear indication of instability. For our new formulation, which adds the term \eqref{eq: difference term siam}, the values are zero again.
When examining the term \eqref{eq: difference term siam}, we can confirm that it should be more relevant for relatively large volume fractions $\alpha$ as we integrate over cells of type $I_{\Kone}$, which have length $\alpha h$, and the derivative $\partial_x w_{k-1}$ scale like
$\mathcal{O}(1/h)$. 

This example also shows that it is important to not only focus on the case of tiny $\alpha$'s but to also ensure that everything runs stable for larger volume fractions as well.

\begin{remark}
A similar observation seems to hold true for solving the Euler equations: reducing $J_h^{1,\Kone}$ to only using the single term
\begin{equation*}
- \etaKone\int_{\Kone}\left(\numflux{\bfu_{k-1}(x)}{\bfu_{\Ktwo}(x)}-\bff(\bfu_{\Kone}(x))\right)\cdot\partial_x \bfw_{\Kone}(x)\dd x
\end{equation*}
leads to stable results for $P^1$ polynomials in our tests but causes instabilities for higher order polynomials.
\end{remark}

\subsection{Limiter}\label{sec: limiter}
To run test cases involving a shock in a stable way, we need a limiter.
We use the {\em total variation diminishing in the means} (TVDM) generalized slope limiter developed by Cockburn and Shu \cite{Cockburn1998,CockburnShu1989}, which we modify appropriately for the neighbors $I_{k-1}$ and $I_{\Ktwo}$ of the small cut cell.

The standard scheme for limiting the discrete solution $u_j$ on a cell $I_j, j \in \Iall,$ (of a non-uniform mesh)
can be summarized as follows:
\begin{enumerate}
    \item Compute the limited extrapolated values $u_j^{\lim}(x_{j-\frac{1}{2}}^+)$ and $u_j^{\lim}(x_{j+\frac{1}{2}}^-)$:
    \begin{align*}
        u_j^{\lim}(x_{j-\frac{1}{2}}^+) &= \ubar_j - \tildem(\ubar_j-u_j(x_{j-\frac{1}{2}}^+),\ubar_j-\ubar_{j-1},\ubar_{j+1}-\ubar_j)\\
        u_j^{\lim}(x_{j+\frac{1}{2}}^-) &= \ubar_j + \tildem(u_j(x_{j+\frac{1}{2}}^-)-\ubar_j,\ubar_j-\ubar_{j-1},\ubar_{j+1}-\ubar_j)
    \end{align*}
    with $\ubar_j$ denoting the average mass of $u_j$ over cell $I_j$ and $\tildem$ being the \textit{minmod} function given by
    \begin{equation*}
    \tildem(a_1,\ldots, a_n) = 
    \begin{cases}
    s \cdot \min_{1\le i \le n} \abs{a_i} & \text{if } \text{sign}\,(a_1) = \ldots = \text{sign}\,(a_n) = s,\\
    0 & \text{otherwise.}
    \end{cases}
    \end{equation*}
    \item If the limited values $u_j^{\lim}(x_{j-\frac{1}{2}}^+)$ and $u_j^{\lim}(x_{j+\frac{1}{2}}^+)$ are equal to the unlimited values $u_j(x_{j-\frac{1}{2}}^+)$ and $u_j(x_{j+\frac{1}{2}}^+)$, set $u_j^{\lim}=u_j$.
    Otherwise, reduce $u_j$ to $P^1$ by setting higher order coefficients to zero. 
    (Note that this does not change the mass as we use a Legendre basis.)
    Then, limit the linear polynomial
    such that the edge evaluations of the limited polynomial do not exceed $u_j^{\lim}(x_{j-\frac{1}{2}}^+)$ and $u_j^{\lim}(x_{j+\frac{1}{2}}^+)$, respectively. Use the outcome as $u_j^{\lim}$.
\end{enumerate}
Note that despite using the minmod function, this approach of limiting is more in the spirit of the MC limiter.

In the penalty term $J_h$, we evaluate the solutions 
of cells $I_{k-1}$ and $I_{\Ktwo}$ outside of their original support. We therefore postprocess the limiting
on  these cells to additionally enforce 
\begin{gather*}
    \min \left(\ubar_{\kk-1}^n, \ubar_{\Kone}^n, \ubar_{\Ktwo}^n \right)
    \le u_{\kk-1}(x_{\Kcut}) \le \max \left(\ubar_{\kk-1}^n, \ubar_{\Kone}^n, \ubar_{\Ktwo}^n \right),\\
    \min \left(\ubar_{\kk-1}^n, \ubar_{\Kone}^n, \ubar_{\Ktwo}^n \right)
    \le u_{\Ktwo}(x_{k-\frac{1}{2}}) \le \max \left(\ubar_{\kk-1}^n, \ubar_{\Kone}^n, \ubar_{\Ktwo}^n \right).
  \end{gather*}
As for the standard cells, we first apply a check whether it is necessary to change the high order polynomial (see Step 1) and only adjust the solution if needed.

\begin{remark}
This limiter has been adjusted to our stabilization and produces robust results but 
tends to be diffusive for higher order. 
The focus of this work is on the development of the stability term $J_h$, not on limiting.
Limiting in this setup is a very challenging task as it combines the issues of not
limiting higher order polynomials at smooth extrema and complications caused by the cut cell geometry \cite{May_Berger_LP}.
We plan to address this in future work.
\end{remark}

\section{Theoretical results}\label{sec: theoretical results}

In this section we present theoretical results concerning the stability of the stabilized scheme.
For this, we focus on \textit{scalar} conservation laws given by \eqref{eq: scalar cons law}. We also require some standard properties for the numerical flux, compare, e.g., Cockburn and Shu \cite{CockburnShu1989}.

\begin{prerequisite} \label{properties num flux}
  We request the numerical flux $\numfluxwo$ to satisfy the following properties:
  \begin{enumerate}
  	\item Consistency: $\numflux{u}{u} = f(u)$.
	\item Continuity: $\numflux{u^-}{u^+}$ is at least Lipschitz continuous with respect to both arguments $u^-$ and $u^+$. 
	\item Monotonicity: $\numflux{u^-}{u^+}$
	\begin{itemize}
		\item is a non-decreasing function of its first argument $u^-$,
		\item is a non-increasing function of its second argument $u^+$.
	\end{itemize}
  \end{enumerate}
  \end{prerequisite}
  Then, the flux has the E-flux property defined by Osher \cite{Osher1984}: For all $u$ between $u^-$ and $u^+$ there holds 
  \begin{equation}\label{eq: e flux}
      (\numflux{u^-}{u^+}-f(u))(u^+-u^-)\leq 0.
  \end{equation}

\subsection{Theoretical results for \texorpdfstring{$P^0$}{P0}}
We first consider the case of piecewise constant polynomials. Then, the stabilization $J_h^{1,\Kone}$, which involves derivatives
of the test functions, vanishes, and
the stabilization for the model problem $\Th$ reduces to $J_h(u^h,w^h)= J_h^{0,\Kone}(u^h,w^h)$. 
In time we use explicit Euler. This results in the following update formulae in the neighborhood of the small cut cell
$I_{\Kone}$ 
\begin{align}\label{eq: formula for P0}
\begin{split}
    u^{n+1}_{k-2} = & u^n_{k-2}-\frac{\Delta t}{h}\lbrace \numflux{u^n_{k-2}}{u^n_{k-1}}-\numflux{u^n_{k-3}}{u^n_{k-2}}\rbrace,\\
    u^{n+1}_{k-1} = & u^n_{k-1}-\frac{\Delta t}{h}\lbrace(1-\etaKone)\numflux{u^n_{k-1}}{u^n_{\Kone}}
    +\etaKone\numflux{u^n_{k-1}}{u^n_{\Ktwo}}-\numflux{u^n_{k-2}}{u^n_{k-1}}\rbrace,\\
    u^{n+1}_{\Kone} = & u^n_{\Kone}-\frac{\Delta t}{\alpha h}(1-\etaKone)\lbrace\numflux{u^n_{\Kone}}{u^n_{\Ktwo}}-\numflux{u^n_{k-1}}{u^n_{\Kone}}\rbrace,\\
    u^{n+1}_{\Ktwo} = & u^n_{\Ktwo}-\frac{\Delta t}{(1-\alpha)h}\lbrace\numflux{u^n_{\Ktwo}}{u^n_{k+1}}-(1-\etaKone)\numflux{u^n_{\Kone}}{u^n_{\Ktwo}}-\etaKone \numflux{u^n_{k-1}}{u^n_{\Ktwo}}\rbrace,\\
    u^{n+1}_{k+1} = & u^n_{k+1}-\frac{\Delta t}{h}\lbrace \numflux{u^n_{k+1}}{u^n_{k+2}}-\numflux{u^n_{\Ktwo}}{u^n_{k+1}}\rbrace.
\end{split}
\end{align}
We use the common FV notation and denote the solution in cell $I_j$ at time $t^n$ by $u_j^n$.
Evaluation points $'x'$ are not necessary as we only consider piecewise constant solutions.

The update formulae in \eqref{eq: formula for P0} gives some insight in the effect of the stabilization.
Let us first consider the 
update for the small cell $I_{\Kone}$. The factor $(1-\etaKone) = \frac{\alpha}{\nu}$ in front of the flux difference
balances the factor $\alpha$ (from the cell size $\alpha h$) in the denominator and provides the prerequisite for a stable update on $I_{\Kone}$. 
Further, there now exists an additional flux $\numflux{u_{k-1}}{u_{\Ktwo}}$ between the cells
$I_{k-1}$ and $I_{\Ktwo}$, which are not direct neighbors.
The scaled mass given by
$\etaKone \numflux{u_{k-1}}{u_{\Ktwo}}$ is directly transported between cells $I_{k-1}$ and $I_{\Ktwo}$ (depending on the flow direction), skipping the small cut cell $I_{\Kone}$.

\subsubsection{Monotonicity}

A standard first-order FV/DG scheme is monotone on a uniform mesh for scalar conservation laws. 
We can also show this property for our stabilized scheme on the model mesh $\Th$. This guarantees that overshoot cannot occur.
For explicit schemes, a monotone scheme can be defined as follows, compare Toro \cite{Toro}.
\begin{definition}\label{def: monotonicity}
A method
$ u^{n+1}_\jj = H(u^n_{\jj-i_L},u^n_{\jj-i_L+1},...,u^n_{\jj+i_R}) $
is called {\em monotone}, if $\:\forall j$ there holds for every $l$ with $-i_L\le l\le i_R$
\begin{equation}\label{Def_monotone_coeff}
\frac{\partial H}{\partial u_{j+l}}(u_{j-i_L},...,u_{j+i_R})\geq 0.
\end{equation}
\end{definition}
\begin{theorem}\label{theorem: monotonicity}
Consider the stabilized scheme \eqref{eq: stab. scheme} for $P^0$ polynomials for the model problem $\Th$ with explicit Euler in time, applied to a
scalar conservation law. Let the time step be given by $\Delta t = \frac{\CFLfac h}{\smax}$ for $0 < \alpha < \CFLfac < 1-\alpha$.
Let the numerical flux $\mathcal{H}$ satisfy prerequisite \ref{properties num flux}. 
Further, we require:
\begin{equation}\label{eq: numflux condition}
\abso{\numfluxa{u}{v}} + \abso{\numfluxb{w}{u}} \le \frac{\nu h}{\Delta t} \quad \forall u,v,w.
\end{equation} 
Then, the stabilized scheme is monotone.
\end{theorem}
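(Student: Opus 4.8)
The plan is to verify the sign condition \eqref{Def_monotone_coeff} of Definition \ref{def: monotonicity} directly for each of the five update formulae in \eqref{eq: formula for P0}, treating each output cell as a function $H$ of its stencil and checking the partial derivatives one at a time. The updates for $u^{n+1}_{k-2}$ and $u^{n+1}_{k+1}$ (and, away from the cut cell, every equidistant cell) coincide with the standard first-order scheme, for which monotonicity under the CFL condition is classical; I would dispatch these first. It then remains to treat the three cells collected in $\Ineigh$, namely $I_{k-1}$, $I_{\Kone}$, and $I_{\Ktwo}$, whose updates carry the extra $\etaKone$-weighted fluxes introduced by $J_h^{0,\Kone}$.

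For the \emph{off-diagonal} partial derivatives I would argue uniformly. Each such derivative is, up to a positive prefactor of the form $\frac{\Delta t}{(\cdot)\,h}$, either a factor $\numfluxa$, which is $\ge 0$ because $\numfluxwo$ is non-decreasing in its first argument, or a factor $-\numfluxb$, which is $\ge 0$ because $\numfluxwo$ is non-increasing in its second argument, each possibly multiplied by one of the weights $\etaKone$ or $1-\etaKone$. Since $0 < \etaKone = 1-\frac{\alpha}{\CFLfac} < 1$ for $\alpha < \CFLfac$, both weights are non-negative, so every off-diagonal derivative is non-negative. This settles all arguments except the diagonal one on each of the three special cells.

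The substance of the proof is the three \emph{diagonal} derivatives $\partial u^{n+1}_j/\partial u^n_j$ for $j \in \Ineigh$, which must each be shown to be $\ge 0$. Here I would exploit the key cancellation on the small cell: since $\Delta t = \frac{\CFLfac h}{\smax}$ and $1-\etaKone = \frac{\alpha}{\CFLfac}$, the prefactor in the $I_{\Kone}$-update collapses to $\frac{\Delta t}{\alpha h}(1-\etaKone) = \frac{1}{\smax}$, so the diagonal term equals $1 - \frac{1}{\smax}\big(\numfluxa{u^n_{\Kone}}{u^n_{\Ktwo}} + \abso{\numfluxb{u^n_{k-1}}{u^n_{\Kone}}}\big)$, which is $\ge 0$ by the flux condition \eqref{eq: numflux condition} evaluated at $u = u^n_{\Kone}$ (noting $\frac{\CFLfac h}{\Delta t} = \smax$). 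For the left neighbor $I_{k-1}$, the diagonal term contains a convex combination, with weights $1-\etaKone$ and $\etaKone$, of $\numfluxa{u^n_{k-1}}{u^n_{\Kone}}$ and $\numfluxa{u^n_{k-1}}{u^n_{\Ktwo}}$ together with $\abso{\numfluxb{u^n_{k-2}}{u^n_{k-1}}}$; applying \eqref{eq: numflux condition} at $u = u^n_{k-1}$ to each of the two pairings and taking the same convex combination bounds the bracket by $\frac{\CFLfac h}{\Delta t}$, so that the diagonal term is $\ge 1 - \CFLfac > 0$.

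The step I expect to be the real obstacle — and the one that pins down the hypothesis $\CFLfac < 1-\alpha$ — is the diagonal derivative on $I_{\Ktwo}$. That cell has length $(1-\alpha)h$ yet, through the stabilization, receives flux contributions from both $I_{\Kone}$ and $I_{k-1}$, so its diagonal term is $1 - \frac{\Delta t}{(1-\alpha)h}\big(\numfluxa{u^n_{\Ktwo}}{u^n_{k+1}} + (1-\etaKone)\abso{\numfluxb{u^n_{\Kone}}{u^n_{\Ktwo}}} + \etaKone\abso{\numfluxb{u^n_{k-1}}{u^n_{\Ktwo}}}\big)$. Bounding the bracket by $\frac{\CFLfac h}{\Delta t}$ exactly as before (applying \eqref{eq: numflux condition} at $u = u^n_{\Ktwo}$ and using the convex-combination structure in $\etaKone$) leaves the factor $\frac{\Delta t}{(1-\alpha)h}\cdot\frac{\CFLfac h}{\Delta t} = \frac{\CFLfac}{1-\alpha}$, which is $< 1$ precisely because $\CFLfac < 1-\alpha$; this is where the two-sided bound on $\CFLfac$ in the hypotheses is indispensable. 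Collecting the estimates for the five cells then yields monotonicity.
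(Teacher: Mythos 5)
Your proposal is correct and follows essentially the same route as the paper's own proof: dispatch the equidistant cells by the classical argument, get all off-diagonal derivatives from flux monotonicity and $0<\etaKone<1$, and verify the three diagonal derivatives on $\Ineigh$ by applying condition \eqref{eq: numflux condition} through the convex-combination structure in $\etaKone$, with the factor $\frac{\CFLfac}{1-\alpha}<1$ on $I_{\Ktwo}$ using the hypothesis $\CFLfac < 1-\alpha$ exactly as you identify. The only cosmetic difference is that you write $-\numfluxb = \abso{\numfluxb}$ explicitly and note the collapse $\frac{\Delta t}{\alpha h}(1-\etaKone)=\frac{1}{\smax}$ on the small cell, which the paper carries out as the equivalent cancellation $\frac{\Delta t}{\alpha h}\frac{\alpha}{\CFLfac}\frac{\CFLfac h}{\Delta t}=1$.
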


\begin{remark}
Condition \eqref{eq: numflux condition} is a common condition for monotonicity on regular meshes, compare
\cite{Mishra_Abgrall}. 
\end{remark}

\begin{proof}
Away from the two cut cells, we use a standard first-order DG scheme on a uniform mesh, which is monotone under the given assumptions. 
It therefore suffices to show property \eqref{Def_monotone_coeff} for the three cells $I_j, j\in\Ineigh,$ that are affected by our stabilization. The update formulae are given by \eqref{eq: formula for P0}. Due to $0<\etaKone<1$, the
non-negativity of $\frac{\partial}{\partial u_i^n} u_j^{n+1}$ for $i \neq j$ follows directly from the monotonicity of the fluxes.
It remains to examine $\frac{\partial}{\partial u^n_j} u_j^{n+1}$ for $j \in \Ineigh$.
We start with cell $I_{k-1}$:
\begin{align*}
\frac{\partial}{\partial u^n_{k-1}} u^{n+1}_{k-1}
=& \,1-\frac{\Delta t}{h}\{(1-\etaKone)\numfluxa{u^n_{k-1}}{u^n_{\Kone}}+\etaKone\numfluxa{u^n_{k-1}}{u^n_{\Ktwo}}\\
&\qquad -(1-\etaKone)\numfluxb{u^n_{k-2}}{u^n_{k-1}}  -\etaKone \numfluxb{u^n_{k-2}}{u^n_{k-1}} \}\\
\geq & \,1-\frac{\Delta t}{h}\left\{ (1-\etaKone)\frac{\nu h}{\Delta t} + \etaKone \frac{\nu h}{\Delta t}  \right\}\geq 0.
\end{align*}
For the small cut cell $I_{\Kone}$ there holds with $\etaKone = 1 - \frac{\alpha}{\CFLfac}$
\begin{align*}
\frac{\partial}{\partial u^n_{\Kone}} u^{n+1}_{\Kone}
=& \, 1-\frac{\Delta t }{\alpha h}(1-\etaKone)\{\numfluxa{u^n_{\Kone}}{u^n_{\Ktwo}}-\numfluxb{u^n_{k-1}}{u^n_{\Kone}}\}\\
\geq & \, 1-\frac{\Delta t}{\alpha h}\frac{\alpha}{\CFLfac}\frac{\nu h}{\Delta t}\geq 0.
\end{align*}
Finally, for cell $I_{\Ktwo}$ we get
\begin{align*}
\frac{\partial}{\partial u^n_{\Ktwo}} u^{n+1}_{\Ktwo}
=& \, 1-\frac{\Delta t}{(1-\alpha) h}\{(1-\etaKone)\numfluxa{u^n_{\Ktwo}}{u^n_{k+1}} + \etaKone\numfluxa{u^n_{\Ktwo}}{u^n_{k+1}}\\
&\qquad \qquad\quad -(1-\etaKone)\numfluxb{u^n_{\Kone}}{u^n_{\Ktwo}} -\etaKone\numfluxb{u^n_{k-1}}{u^n_{\Ktwo}}\}\\
\geq& \, 1-\frac{\Delta t}{(1-\alpha) h} \frac{\nu h}{\Delta t} \geq 0.
\end{align*}
This concludes the proof.
\end{proof}

\subsection{\texorpdfstring{$L^2$}{L2} stability for \texorpdfstring{$P^p, p \ge 0$}{Pp, p ge 0}}
In this section we prove that the stabilized semi-discrete scheme \eqref{eq: stab. scheme} is $L^2$ stable for arbitrary polynomial degree $p$ for the model problem $\Th$. The time is not discretized here and we consider scalar conservation laws.

We note that the \textit{un}stabilized semi-discrete scheme \eqref{eq: scheme 1d wo stab} is also $L^2$ stable in this setting as shown in the proof below. But when combined with an explicit time stepping scheme, one would need to take tiny time steps to ensure stability for the fully discrete scheme.
This is not the case for our stabilized scheme. 
The difficulty in designing the stabilization term $J_h$ is to find a formulation that is both $L^2$ stable for the semi-discrete setting and solves the small cell problem for the fully discrete setting in a monotone way.
\begin{theorem}\label{theorem: l2 stability}
Let $u^h(t)$, with $u^h(t) \in \Vhp$ for any fixed $t$, be the solution to the semi-discrete problem \eqref{eq: stab. scheme} for the scalar equation \eqref{eq: scalar cons law} with periodic boundary conditions. Let the numerical flux function $\mathcal{H}$ satisfy prerequisite \ref{properties num flux}.
Then, the solution satisfies for all $t\in (0,T)$
\begin{equation*}
    \norm{u^h(t)}_{L^2(\Omega)} \leq \norm{u^h(0)}_{L^2(\Omega)}.
\end{equation*}
\end{theorem}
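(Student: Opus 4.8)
The plan is to test the stabilized weak form \eqref{eq: stab. scheme} against the solution itself, choosing $w^h=u^h(t)$. Since $J_h$ is linear in its second argument and $\scpL{d_t u^h}{u^h}=\tfrac12\tfrac{d}{dt}\norm{u^h}_{L^2(\Omega)}^2$, the assertion reduces to the coercivity estimate $a_h(u^h,u^h)+J_h(u^h,u^h)\ge 0$; this immediately gives $\tfrac{d}{dt}\norm{u^h}_{L^2(\Omega)}^2\le 0$ and hence the claim after integrating in $t$.

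For the unstabilized form $a_h(u^h,u^h)$ I would use the classical cell-entropy technique of Jiang and Shu. Introducing an antiderivative $F$ with $F'=f$, every volume integral $\int_j f(u^h)\,\partial_x u^h\dd x$ becomes a difference of boundary evaluations of $F$; collecting these edge by edge against the numerical-flux jumps produces, at an edge with traces $u^-$ and $u^+$, the quantity $F(u^+)-F(u^-)-\numflux{u^-}{u^+}(u^+-u^-)$, which is non-negative by integrating the E-flux inequality \eqref{eq: e flux} over the states between $u^-$ and $u^+$. Periodicity makes the telescoping clean and cancels the boundary contributions. I would single out the two edges $x_{k-\frac12}$ and $x_\Kcut$ adjacent to the small cut cell and keep their contributions explicit, since these are the ones that interact with the penalty.

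The core of the argument is to show that the two retained $a_h$-edge terms together with $J_h^{0,\Kone}(u^h,u^h)$ and $J_h^{1,\Kone}(u^h,u^h)$ sum to a non-negative quantity; here the precise form of \eqref{eq: def J1} is essential. Writing $g:=\numflux{u_{k-1}}{u_\Ktwo}$ on $I_\Kone$ and noting the chain rule $\partial_x g=\numfluxa{u_{k-1}}{u_\Ktwo}\partial_x u_{k-1}+\numfluxb{u_{k-1}}{u_\Ktwo}\partial_x u_\Ktwo$, the second and third groups of \eqref{eq: def J1} collapse (with $w^h=u^h$) to $\int_\Kone(\sum_{j\in\Ineigh}\matrixbfK(j)u_j)\,\partial_x g\dd x$. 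Integrating the first group by parts over $I_\Kone$ produces exactly the opposite interior integral, so the two cancel and what survives is the pure boundary expression
\begin{equation*}
J_h^{1,\Kone}(u^h,u^h)=\etaKone\sum_{j\in\Ineigh}\matrixbfK(j)\Big[\big(g\,u_j-F(u_j)\big)(x_\Kcut)-\big(g\,u_j-F(u_j)\big)(x_{k-\frac12})\Big].
\end{equation*}
These boundary terms then pair with the $a_h$-edge terms and the cross term of $J_h^{0,\Kone}$ in \eqref{eq: def J0}. As a sanity check, in the linear-advection model this reduction turns the penalty into a difference of squares and the surviving contributions organize into the positive-semidefinite quadratic form with matrix $\left(\begin{smallmatrix}\etaKone&\etaKone\\\etaKone&1\end{smallmatrix}\right)$ (determinant $\etaKone(1-\etaKone)\ge 0$) plus the manifestly non-negative remainder $\tfrac{\beta}{2}(1-\etaKone)\jump{u^h}_{k-\frac12}^2$ from the edge $x_{k-\frac12}$.

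The main obstacle is the nonlinear bookkeeping in this final step. One cannot simply complete the square as in the linear case; instead, after the collapse above, I would invoke the E-flux inequality \eqref{eq: e flux} for the relevant triples of traces at $x_{k-\frac12}$ and $x_\Kcut$, weighted by the non-negative coefficients $\matrixL_{\Kone},\matrixR_{\Kone},\etaKone,1-\etaKone$ (recall $\matrixL_{\Kone}+\matrixR_{\Kone}=1$ and $0\le\etaKone\le1$). Checking that the Jacobian terms $\numfluxa{\cdot}{\cdot}$ and $\numfluxb{\cdot}{\cdot}$ reconstruct precisely the total derivative $\partial_x g$ used in the cancellation, and that the resulting boundary quantities assemble into E-flux expressions of definite sign, is the delicate part and the reason $J_h^{1,\Kone}$ is defined exactly as in \eqref{eq: def J1}.
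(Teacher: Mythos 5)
Your proposal follows essentially the same route as the paper's own proof: testing with $u^h$ itself, the Jiang--Shu cell-entropy/telescoping argument with an antiderivative of $f$, the collapse of $J_h^{1,\Kone}(u^h,u^h)$ to pure boundary terms via the chain rule for $\numflux{u_{k-1}}{u_{\Ktwo}}$ (your displayed boundary expression coincides exactly with the paper's intermediate identity, with your $F$ playing the role of the paper's $g$), and the final assembly into non-negative E-flux expressions weighted by $\matrixL_{\Kone}+\matrixR_{\Kone}=1$ and $0\le\etaKone\le 1$. The argument is correct, and even your linear-advection sanity check matches the remark the paper records after the theorem.
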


\begin{proof}
We choose $w^h = u^h(t)$ in \eqref{eq: stab. scheme} to get 
\begin{equation*}
 \scpLscalar{d_t u^h(t)}{u^h(t)}+a_h(u^h(t), u^h(t)) +  J_h(u^h(t),u^h(t)) = 0.
\end{equation*}
We integrate in time to get for the first term
\begin{equation*}
\int_{0}^t \scpLscalar{d_{\tau} u^h(\tau)}{u^h(\tau)} \: \dd \tau = 
\int_0^t \frac{d}{d\tau} \frac{1}{2} \norm{u^h(\tau)}_{L^2(\Omega)}^2 \dd \tau 
= \frac 1 2 \norm{u^h(t)}_{L^2(\Omega)}^2 - \frac 1 2  \norm{u^h(0)}_{L^2(\Omega)}^2.
\end{equation*}
It remains to show that for any fixed $t$
\begin{equation*}
a_h(u^h(t), u^h(t)) +  J_h(u^h(t),u^h(t)) \ge 0.
\end{equation*}
In the following we will suppress the explicit time dependence for brevity.

\textbf{Unstabilized case:} We first prove $L^2$ stability for the unstabilized case, i.e., we show $a_h(u^h,u^h) \ge 0$.
Here, we follow Jiang and Shu \cite{JiangShu} for the special case of the square entropy function.
We define 
\begin{equation*}
g(u) = \int^u f(\hat{u})\dd \hat{u}.
\end{equation*}
This implies $g'(u) = f(u)$. By the E-flux property \eqref{eq: e flux} and the mean value theorem, there holds 
\begin{equation}\label{eq: relation H and f}
    \numflux{u^-}{u^+} (u^- - u^+) - (g(u^-) - g(u^+)) \ge 0.
\end{equation}
Further, there holds for an arbitrary cell $I_i$ and an arbitrary $u_j$
\begin{equation*}
\int_i f(u_j) \: \partial_x u_j \dd x = g(u_j(x_{i+\frac{1}{2}}))-g(u_j(x_{i-\frac{1}{2}})).
\end{equation*}
We define the flux
\begin{equation*}
F_{i+\frac{1}{2}}(u) = \numflux{u_{i}}{u_{i+1}}(x_{i+\frac{1}{2}})\; u_{i}(x_{i+\frac{1}{2}})- g(u_i(x_{i+\frac{1}{2}})).
\end{equation*}
Then we can rewrite the contribution of the bilinear form $a^h$ for a single, arbitrary cell $I_i$ as
\begin{align*}
-\int_i &f(u_i(x)) \partial_x u_i(x)\dd x + \numflux{u_i}{u_{i+1}}(x_{i+\frac{1}{2}}) \: u_i(x_{i+\frac{1}{2}})
- \numflux{u_{i-1}}{u_{i}}(x_{i-\frac{1}{2}}) \: u_i(x_{i-\frac{1}{2}}) \\
=&- g(u_i(x_{i+\frac{1}{2}}))+g(u_i(x_{i-\frac{1}{2}})) + \numflux{u_i}{u_{i+1}}(x_{i+\frac{1}{2}}) \: u_i(x_{i+\frac{1}{2}})
- \numflux{u_{i-1}}{u_{i}}(x_{i-\frac{1}{2}}) \: u_i(x_{i-\frac{1}{2}}) \\
=& F_{i+\frac{1}{2}}(u) + g(u_i(x_{i-\frac{1}{2}}))
- \numflux{u_{i-1}}{u_{i}}(x_{i-\frac{1}{2}}) \: u_i(x_{i-\frac{1}{2}}) \\
=& F_{i+\frac{1}{2}}(u) - F_{i-\frac{1}{2}}(u) - g(u_{i-1}(x_{i-\frac{1}{2}})) + g(u_i(x_{i-\frac{1}{2}}))
+ \numflux{u_{i-1}}{u_{i}}(x_{i-\frac{1}{2}}) \jump{u^h}_{i-\frac{1}{2}}.
\end{align*}
Using the notation $\jump{g(u)}_{i+\frac{1}{2}} = g(u_{i}(x_{i+\frac{1}{2}})) - g(u_{i+1}(x_{i+\frac{1}{2}}))$, 
we can summarize
\begin{align*}
a_h (u^h,u^h) =& \sum_{\jj \in \Iequi}
\left( F_{\jj+\frac{1}{2}}(u) - F_{\jj-\frac{1}{2}}(u)
+ \numflux{u_{\jj-1}}{u_{\jj}}(x_{\jj-\frac{1}{2}}) \jump{u^h}_{\jj-\frac{1}{2}} 
- \jump{g(u)}_{\jj-\frac{1}{2}}
\right) \\
&+ \left( F_{\text{cut}}(u) - F_{k-\frac{1}{2}} (u)
+ \numflux{u_{k-1}}{u_{\Kone}}(x_{k-\frac{1}{2}}) \jump{u^h}_{k-\frac{1}{2}} 
- \jump{g(u)}_{k-\frac{1}{2}}
\right) \\
& + \left( F_{k+\frac{1}{2}}(u) - F_{\text{cut}}(u) 
+ \numflux{u_{\Kone}}{u_{\Ktwo}}(x_{\text{cut}}) \jump{u^h}_{\text{cut}} 
- \jump{g(u)}_{\text{cut}}
\right).
\end{align*}
Due to the fluxes $F$ building a telescope sum and the usage of periodic boundary conditions, this implies
\begin{equation*}
a_h (u^h,u^h) = \Tone + \Ttwo
\end{equation*}
with
\begin{align*}
\Tone =&  \sum_{\jj \in \Iequi}
\left( \numflux{u_{\jj-1}}{u_{\jj}}(x_{\jj-\frac{1}{2}}) \jump{u^h}_{\jj-\frac{1}{2}} 
- \jump{g(u)}_{\jj-\frac{1}{2}}
\right), \\
\Ttwo =& \:
\numflux{u_{k-1}}{u_{\Kone}}(x_{k-\frac{1}{2}}) \jump{u^h}_{k-\frac{1}{2}} 
- \jump{g(u)}_{k-\frac{1}{2}} 
+ \numflux{u_{\Kone}}{u_{\Ktwo}}(x_{\text{cut}}) \jump{u^h}_{\text{cut}} 
 - \jump{g(u)}_{\text{cut}}. 
\end{align*}
Note that due to \eqref{eq: relation H and f} $\Tone, \Ttwo \ge 0.$

\textbf{Contribution of stabilization:} Now we consider the stabilization. We will not show $J_h(u^h,u^h) \ge 0$ but instead
$a_h(u^h,u^h)+J_h(u^h,u^h) \ge 0$. For the edge stabilization, we get
\begin{align*}
    \frac{1}{\etaKone} J_h^{0,\Kone}(u^h,u^h) 
    & = \left[\numflux{u_{k-1}}{u_{\Ktwo}} (x_{k-\frac{1}{2}})- \numflux{u_{k-1}}{u_{\Kone}}(x_{k-\frac{1}{2}})\right] \jump{u^h}_{k-\frac{1}{2}}\\
&\quad + \left[\numflux{u_{k-1}}{u_{\Ktwo}}(x_\Kcut)-\numflux{u_{\Kone}}{u_{\Ktwo}}(x_\Kcut)\right]
\jump{u^h}_{\text{cut}} \\
& = -\Ttwo + \Tthree 
\end{align*}
with
\begin{equation*}
\Tthree = \numflux{u_{k-1}}{u_{\Ktwo}} (x_{k-\frac{1}{2}}) \jump{u^h}_{k-\frac{1}{2}} 
- \jump{g(u)}_{k-\frac{1}{2}} 
+ \numflux{u_{k-1}}{u_{\Ktwo}}(x_\Kcut)  
 \jump{u^h}_{\text{cut}}   - \jump{g(u)}_{\text{cut}}. 
\end{equation*}
Since $\etaKone \in (0,1)$, we can later take care of the negative term
$-\eta_{\Kone} \Ttwo$ by adding the bilinear form $a_h$ to get
\begin{equation*}
a_h(u^h,u^h)-\etaKone \Ttwo = \Tone + (1-\etaKone) \Ttwo \ge 0.
\end{equation*}
It remains to examine $\Tthree$ and the volume stabilization term $J_h^{1,\Kone}$. 
Here, we make use of the assumption of the flux $\mathcal{H}$ being differentiable a.e. to write
\begin{equation*}
\frac{\dd}{\dd x}\numflux{u_{k-1}}{u_{\Ktwo}} = \numfluxa{u_{k-1}}{u_{\Ktwo}}\partial_x u_{k-1}+\numfluxb{u_{k-1}}{u_{\Ktwo}}\partial_x u_{\Ktwo}.
\end{equation*}
This implies
\begin{align*}
   \frac{1}{\etaKone} &J^{1,\Kone}_h(u^h,u^h) =\sum_{j\in \Ineigh} \matrixK(j)\int_{\Kone}\left(\numflux{u_{k-1}}{u_{\Ktwo}}-f(u_j)\right)\partial_x u_j\dd x\\
   &+\sum_{j\in \Ineigh} \matrixK(j)\int_{\Kone}\numfluxa{u_{k-1}}{u_{\Ktwo}}\,u_j \:\partial_x u_{k-1}\dd x
   +\sum_{j\in \Ineigh} \matrixK(j)\int_{\Kone}\numfluxb{u_{k-1}}{u_{\Ktwo}}\,u_j \:\partial_x u_{\Ktwo}\dd x\\
   &= \sum_{j\in \Ineigh} \matrixK(j)\int_{\Kone}\numflux{u_{k-1}}{u_{\Ktwo}}\partial_x u_j\dd x
    -\sum_{j\in \Ineigh} \matrixK(j)\left(g(u_j(x_{\Kcut}))-g(u_j(x_{k-\frac{1}{2}}))\right)\\
    &+ \sum_{j\in \Ineigh}\matrixK(j) \int_{\Kone}\left(\frac{\dd }{\dd x}\numflux{u_{k-1}}{u_{\Ktwo}}\right)u_j \: \dd x.
\end{align*}
Using $\frac{d}{dx}\left( \numflux{u_{k-1}}{u_{\Ktwo}}u_j \right) = \numflux{u_{k-1}}{u_{\Ktwo}}\partial_x u_j 
+ \frac{\dd }{\dd x}\numflux{u_{k-1}}{u_{\Ktwo}}u_j$, we get
\begin{align*}
  \frac{1}{\etaKone}  J^{1,\Kone}_h(u^h,u^h) = \sum_{j\in \Ineigh} \matrixK(j) & \bigl[\left(\numflux{u_{k-1}}{u_{\Ktwo}}u_j\right)(x_{\Kcut})
    -\left(\numflux{u_{k-1}}{u_{\Ktwo}}u_j\right)(x_{k-\frac{1}{2}}) .\\
    &-  g(u_j(x_{\Kcut}))+g(u_j(x_{k-\frac{1}{2}}))\bigr].
\end{align*}
Recall that
\begin{equation*}
\matrixK(k-1) = \matrixL_{\Kone}, \quad \matrixK(\Kone) = -1, \quad \matrixK(\Ktwo) = \matrixR_{\Kone} 
\end{equation*}
with $\matrixL_{\Kone},\matrixR_{\Kone} \in [0,1]$ and $\matrixL_{\Kone} + \matrixR_{\Kone} = 1$.
Then, skipping some tedious computations for brevity, we get
\begin{align*}
\frac{1}{\etaKone} J^{1,\Kone}_h(u^h,u^h) + \Tthree  =& \numflux{u_{k-1}}{u_{\Ktwo}}(x_{k-\frac{1}{2}})u_{k-1}(x_{k-\frac{1}{2}}) - \numflux{u_{k-1}}{u_{\Ktwo}}(x_{\text{cut}})u_{\Ktwo}(x_{\text{cut}}) \\
& \qquad - g(u_{k-1}(x_{k-\frac{1}{2}})) + g(u_{\Ktwo}(x_{\text{cut}})) \\
& + \matrixL_{\Kone} \left[ \numflux{u_{k-1}}{u_{\Ktwo}}(x_{\text{cut}})u_{k-1}(x_{\text{cut}}) 
- \numflux{u_{k-1}}{u_{\Ktwo}}(x_{k-\frac{1}{2}})u_{k-1}(x_{k-\frac{1}{2}}) \right.\\
&\left. \qquad  - g(u_{k-1}(x_{\text{cut}})) + g(u_{k-1}(x_{k-\frac{1}{2}}))\right] \\
&+ \matrixR_{\Kone} \left[ \numflux{u_{k-1}}{u_{\Ktwo}}(x_{\text{cut}})u_{\Ktwo}(x_{\text{cut}}) 
- \numflux{u_{k-1}}{u_{\Ktwo}}(x_{k-\frac{1}{2}})u_{\Ktwo}(x_{k-\frac{1}{2}}) \right.\\
&  \left. \qquad  - g(u_{\Ktwo}(x_{\text{cut}})) + g(u_{\Ktwo}(x_{k-\frac{1}{2}}))\right]\\
=& \Tfour + \Tfive
\end{align*}
with
\begin{align*}
\Tfour & =\matrixL_{\Kone}\left[\numflux{u_{k-1}}{u_{\Ktwo}}(x_{\Kcut})\left(u_{k-1}(x_{\Kcut})-u_{\Ktwo}(x_{\Kcut})\right) 
 -g(u_{k-1}(x_{\Kcut}))+ g(u_{\Ktwo}(x_{\Kcut}))\right]\\
\Tfive &=\matrixR_{\Kone}\left[\numflux{u_{k-1}}{u_{\Ktwo}}(x_{k-\frac{1}{2}})\left(u_{k-1}(x_{k-\frac{1}{2}})-u_{\Ktwo}(x_{k-\frac{1}{2}})\right)  -g(u_{k-1}(x_{k-\frac{1}{2}}))+g(u_{\Ktwo}(x_{k-\frac{1}{2}}))\right].
\end{align*}
Note that we use $\matrixL_{\Kone} + \matrixR_{\Kone} = 1$ here.
Again, $\Tfour, \Tfive \ge 0$ due to \eqref{eq: relation H and f}. 
In total, we get for the stabilization
\begin{equation*}
J^{0,\Kone}_h(u^h,u^h) + J^{1,\Kone}_h(u^h,u^h) =  -\etaKone \Ttwo + \etaKone \Tfour + \etaKone \Tfive.
\end{equation*}
Together with the bilinear form $a_h$, this gives
\begin{equation*}
    a_h(u^h,u^h) + J_h(u^h,u^h) = \Tone + (1-\etaKone) \Ttwo + \etaKone \Tfour + \etaKone \Tfive.
\end{equation*}
 As $\Tone, \Ttwo, \Tfour, \Tfive \ge 0$ and all prefactors are non-negative due to
 $0 < \etaKone < 1$, this concludes the proof.
\end{proof}

\begin{remark}
Let us consider the special case of the linear advection equation with $\beta=1$ and upwind flux. Then, $g(u)=\frac 1 2 u^2$ and $a_h(u^h,u^h)$ reduces to
\begin{equation*}
a_h(u^h,u^h) = \underbrace{\sum_{\jj \in \Iequi} \frac 1 2 \jump{u^h}^2_{\jj-\frac{1}{2}}}_{=\Tone} 
+ \underbrace{\frac 1 2 \jump{u^h}^2_{k-\frac{1}{2}} + \frac 1 2 \jump{u^h}^2_{\text{cut}}}_{=\Ttwo} .
\end{equation*}
In the stabilization several terms drop out, compare \eqref{eq: stabilization linear advection},
and we get
\begin{equation*}
\frac{1}{\etaKone}
J_h(u^h,u^h) \ = \underbrace{- \frac 1 2 \jump{u^h}^2_{k-\frac{1}{2}} - \frac 1 2 \jump{u^h}^2_{\text{cut}}}_{=- \Ttwo} + \underbrace{\frac{1}{2}  \left[ u_{\Ktwo}(x_{\text{cut}}) - u_{k-1}(x_{\text{cut}}) \right]^2}_{= \Tfour}.
\end{equation*}
When considering the sum $a_h(u^h,u^h) + J_h(u^h,u^h)$, we observe that the stabilization has the effect of replacing a certain portion, identified by $\etaKone$, of the `standard' jumps $(\Ttwo)$ at both edges $x_{k-\frac 1 2}$ and $x_{\text{cut}}$ of the small cut cell $I_{\Kone}$ by an `extended' jump $(\Tfour)$, evaluated at $x_{\text{cut}}$.  
\end{remark}


\section{Numerical results}\label{sec: numerical results}

In this section we present numerical results for both scalar conservation laws and systems of conservation laws. 
We will show results for piecewise constant polynomials in space as well as for higher order polynomials to assess
accuracy and stability of the proposed scheme.

To test convergence properties we need smooth solutions, which is non-trivial for, e.g., the compressible Euler equations. 
We will use 
\textit{manufactured solutions} for this purpose: we define a smooth function $\bfu(x,t)$ that we would like to be the solution of our system. Then we insert 
$\bfu(x,t)$ in the corresponding equations of the system.
This typically results in a non-zero source term $\mathbf{g}$ on the right hand side. As a consequence, instead of solving
\eqref{eq: conservation law}, we now solve the system
\begin{equation}\label{eq: conservation law with sourceterm}
\bfu_t+ \bff(\bfu)_x = \mathbf{g}\quad \text{ in } \Omega \times (0,T).
\end{equation}
The semi-discrete problem is then given by: Find $\bfu^h \in  \Vhp$ such that
\begin{equation*}
  \scpL{d_t\bfu^h(t)}{\bfw^h}+a_h\left(\bfu^h(t), \bfw^h\right) +  J_h(\bfu^h(t),\bfw^h) = \Source_h\left(\mathbf{g},\bfw^h\right)\quad\forall \, \bfw^h\in \Vhp,
\end{equation*}
 with
  \begin{equation*}
   \Source_h(\mathbf{g},\bfw^h) = \sum_{\jj
   \in \Iall} \int_{\jj} \mathbf{g} \cdot \bfw^h\dd{x}.
  \end{equation*}
  We will discretize this semi-discrete problem with a time stepping scheme whose order is chosen to match the order of the space discretization: When using the polynomial degree $p$ in space, we will use an SSP RK scheme of order $p+1$ in time. In particular, for piecewise constant polynomials in space we will use explicit Euler in time.
  Our test cases are extensions of the model problem $\Th$: we will use many cut cell pairs instead of using only one pair.
  Unless otherwise specified, we choose $\Omega = (0,1)$ and split \textit{every} cell $I_k$ between $x=0.1$ and $x=0.9$ in cut cell pairs  $(I_{\Kone},I_{\Ktwo})$ of lengths $\alpha_k h$ and $(1-\alpha_k)h$, where $\alpha_k\in (0,\frac{1}{2}]$ may be different for different $k$. We consider two cases:
  \begin{itemize}
    \item Case 1 ('$\alpha = 10^{-\square}$'): The cut cell fraction $\alpha_k$ is the same for all cut cell pairs, i.e. $\alpha_k \equiv \alpha$.
    \item Case 2 ('rand $\alpha$'): The cut cell fraction $\alpha_k$ varies and is computed randomly as $\alpha_k = 10^{-2}X_k$ with $X_k$ being a uniformly distributed random number in $(0,1)$.
\end{itemize}
  
  We compute the time step length according to \eqref{eq: time step}
using $\nu=0.4$ in all our experiments.
For systems, we compute the $L^1$ and $L^{\infty}$ error as
\begin{equation*}
    \norm{\bfu(\cdot,T)}_1 = \sum_{l=1}^m \norm{\bfu_l(\cdot,T)}_{L^1(\Omega)}, \quad
    \norm{\bfu(\cdot,T)}_{\infty} = \max_{1\le l \le m} \norm{\bfu_l(\cdot,T)}_{L^{\infty}(\Omega)}.
\end{equation*}
For the tests involving Burgers' equation and the linear system, we use the exact Riemann solver. For the Euler equations, we use the approximate Roe Riemann solver \cite{Toro}.
We implement periodic boundary conditions by setting $\bfu_0(x_{\frac{1}{2}}) = \bfu_N(x_{N+\frac{1}{2}})$ and 
$\bfu_{N+1}(x_{N+\frac{1}{2}}) = \bfu_1(x_{\frac{1}{2}})$. For transmissive boundary conditions we use
$\bfu_0(x_{\frac{1}{2}}) = \bfu_1(x_{\frac{1}{2}})$ and 
$\bfu_{N+1}(x_{N+\frac{1}{2}}) = \bfu_N(x_{N+\frac{1}{2}})$.

\subsection{Burgers equation}

We start with two tests for Burgers equation. In both cases, we initialize the solution with a sine curve.
In the first test, we force the solution to stay smooth. In the second test, we allow the shock and rarefaction waves to develop.

\begin{figure}[ht]
    \centering
    \begin{subfigure}[h]{0.37\linewidth}
    \includegraphics[width=\linewidth]{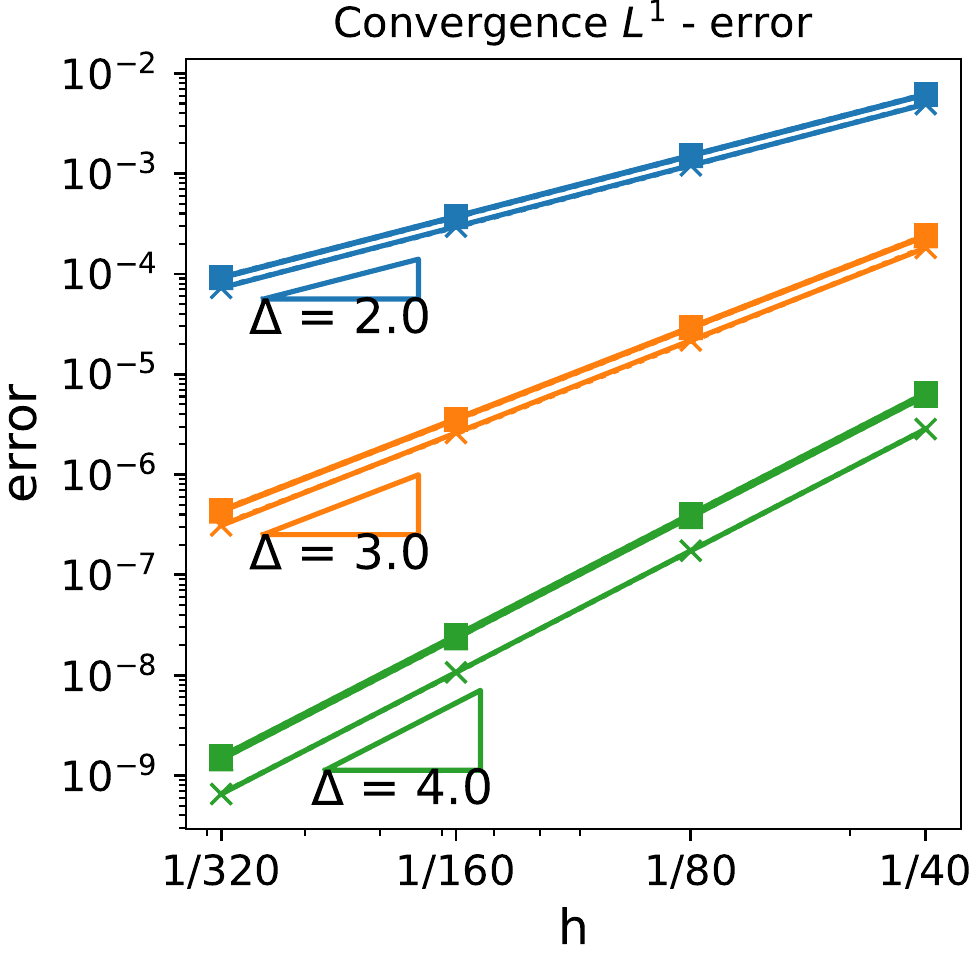}
    \end{subfigure}
    \begin{subfigure}[h]{0.18\linewidth}
    \includegraphics[width=\linewidth]{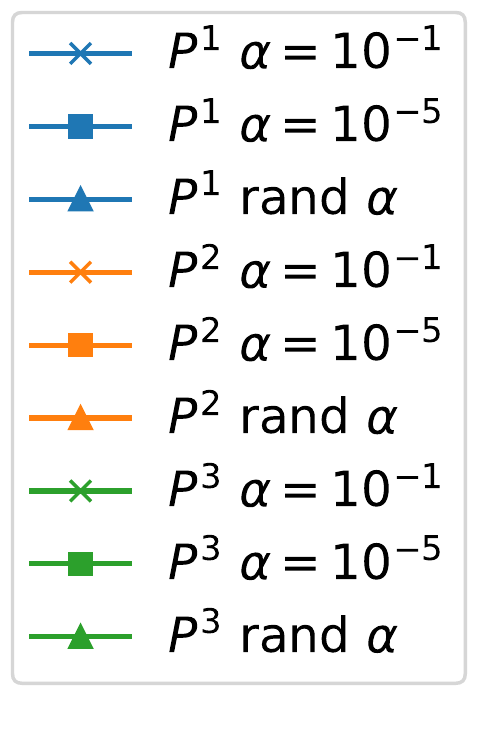}
    \end{subfigure}
    \begin{subfigure}[h]{0.37\linewidth}
    \includegraphics[width=\linewidth]{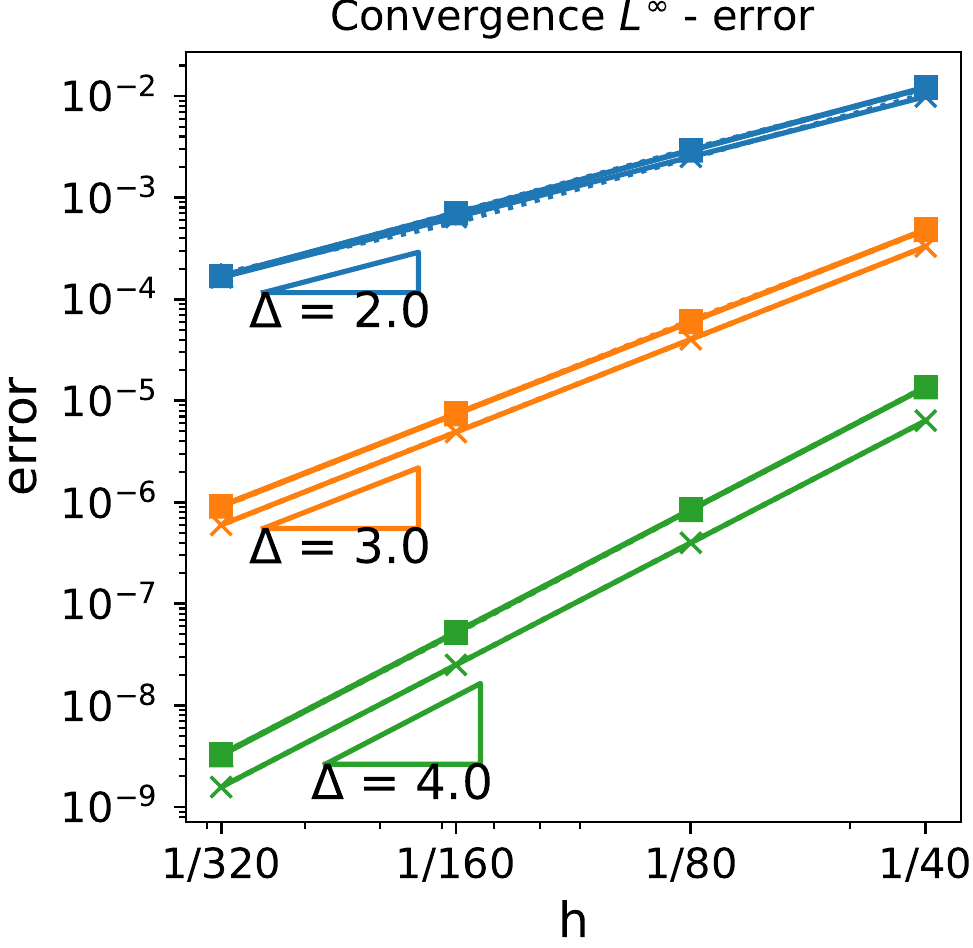}
    \end{subfigure}
    \caption{Convergence test for manufactured solution for Burgers equation: Error in the $L^1$ and $L^{\infty}$ norm.}\label{fig: err burgers}
\end{figure}

\subsubsection{Accuracy test with a manufactured solution}
We consider the manufactured solution 
\begin{equation*}
    u(x,t) = \sin(4\pi(x-t))
\end{equation*}
with periodic boundary conditions. 
This results in the source term
\begin{equation*}
    g(x,t) = 4\pi\cos(4\pi(x-t))\left(\sin(4\pi(x-t))-1\right).
\end{equation*}
In figure \ref{fig: err burgers} we show the error, measured in the $L^1$ and in the $L^\infty$ norm, for different values of the volume fractions $\alpha_k$ and different polynomial degrees at the final time $T=1$. We observe standard convergence rates, i.e., rates $p+1$ for polynomial degree
$p$ for both the $L^1$ and the $L^{\infty}$ norm. We also note that the error sizes for the different test cases
involving varying values of $\alpha_k$ are quite similar.

\subsubsection{Stability test}
Next, we consider a non-smooth problem. 
We choose the initial data
\begin{equation*}
    u_0(x) = \sin{(4\pi (x+0.5))}
\end{equation*}
with periodic boundary conditions and use $g=0$. As is well-known, these initial data result in the development of shock waves in the
regions where the derivative of $u_0$ is negative.

Figure \ref{fig: Burgers Sine} shows the solution at final time $T=0.1$ for different polynomial degrees for $\alpha_k$ being chosen randomly as specified above. The cut cell mesh was created from a mesh with
$N=100$ equidistant cells, and therefore contains 180 cells. For piecewise constant polynomials, the computed solution does not overshoot, consistent with the monotonicity result in theorem \ref{theorem: monotonicity}.
We also show the solution for $P^3$ polynomials, with and without the limiter. Without the limiter, the solution produces overshoot near the shock. Nevertheless, as is the case on a regular mesh, the numerical tests are stable and do not break despite using small cut cells. With limiter, the overshoot is gone.

\begin{figure}[ht]
    \centering
    \begin{subfigure}{0.4\linewidth}
    \includegraphics[width=\linewidth]{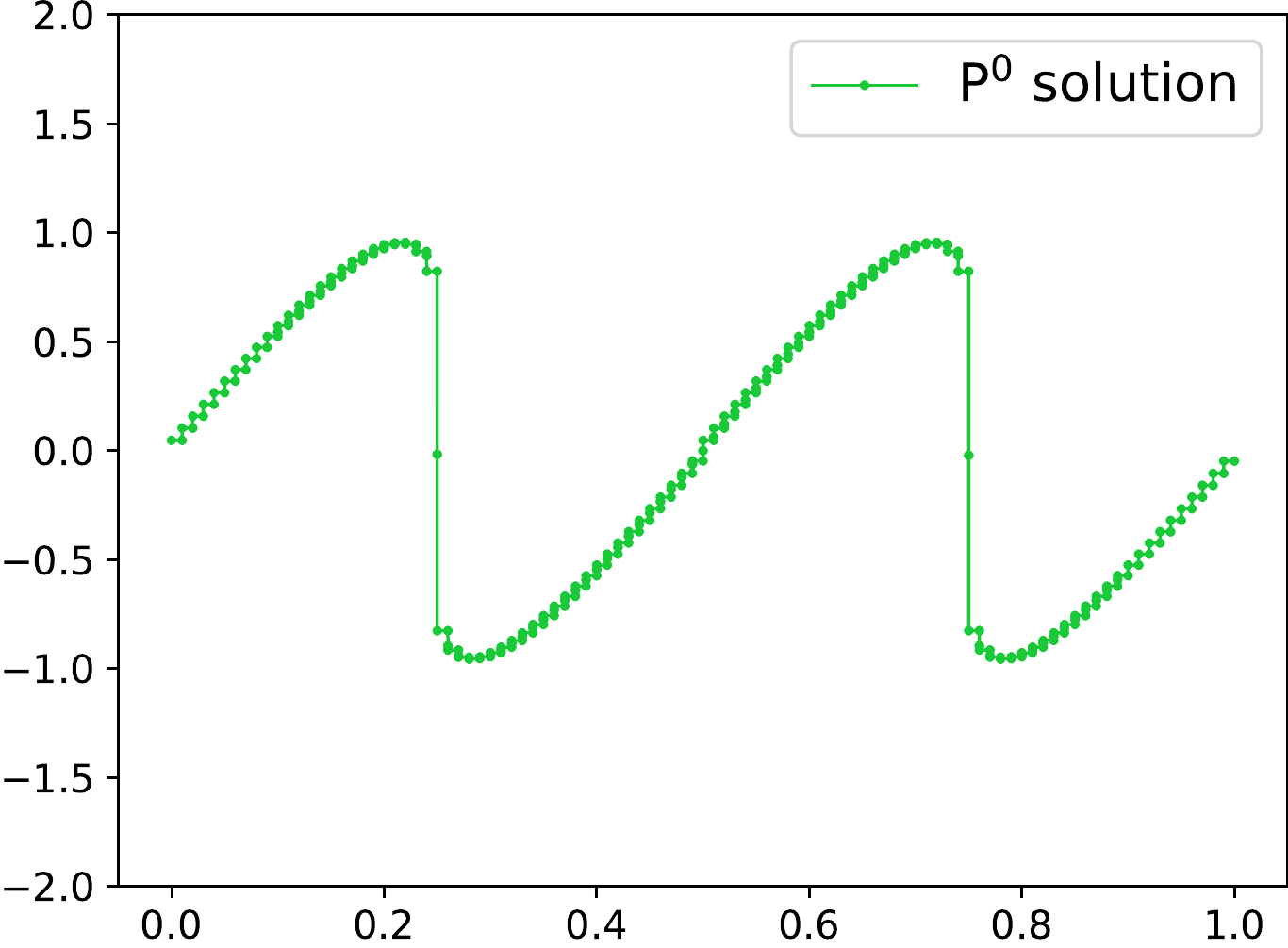}
    \end{subfigure}
    \begin{subfigure}{0.4\linewidth}
    \includegraphics[width=\linewidth]{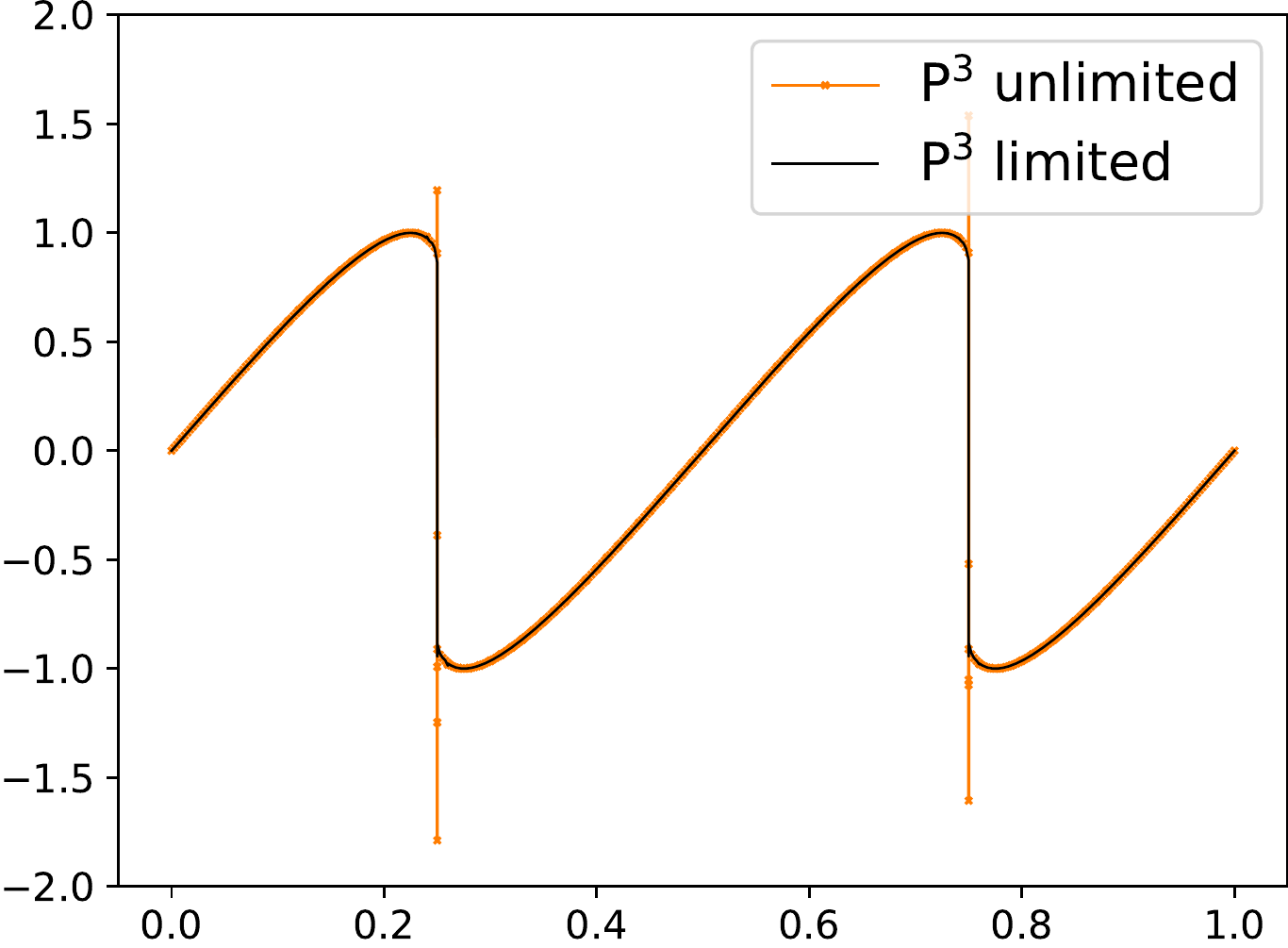}
    \end{subfigure}
    \caption{Stability test for Burgers equation: Solution at final time for piecewise constant polynomials (left) and piecewise cubic polynomials with and without a limiter (right).}\label{fig: Burgers Sine}
\end{figure}

\subsection{Linear systems}
We now consider the linear system given by equation \eqref{eq: lin system} with 
\begin{equation*}
\bfA=\begin{pmatrix*}[r]
4 & 2.5 & -7\\
-1 & 0.5 & 7\\
-0.5 & 1.25 & 1.5
\end{pmatrix*}
\quad \text{and} \quad
    \bfu_0(x,t) = \begin{pmatrix*}
        \sin(2\pi x) \\
        -\frac{1}{3}\cos(2\pi x)\\
        \frac{1}{2}\sin(2\pi x)
    \end{pmatrix*}.
\end{equation*}
The eigenvalues of $\bfA$ are $\lambda_1 = -2$, $\lambda_2=3$ and $\lambda_3 = 5$.
We again use periodic boundary conditions.

In figure \ref{fig: err lin system}, we show the errors in the $L^1$ and in the $L^{\infty}$ norm for piecewise linear, piecewise quadratic, and piecewise cubic polynomials
for different values of the volume fractions $\alpha_k$. 
As for Burgers equation, we observe convergence rates $p+1$ for polynomial degree $p$ for both the $L^1$ and $L^{\infty}$ error.
\begin{figure}[ht]
    \centering
    \begin{subfigure}[h]{0.37\linewidth}
    \includegraphics[width=\linewidth]{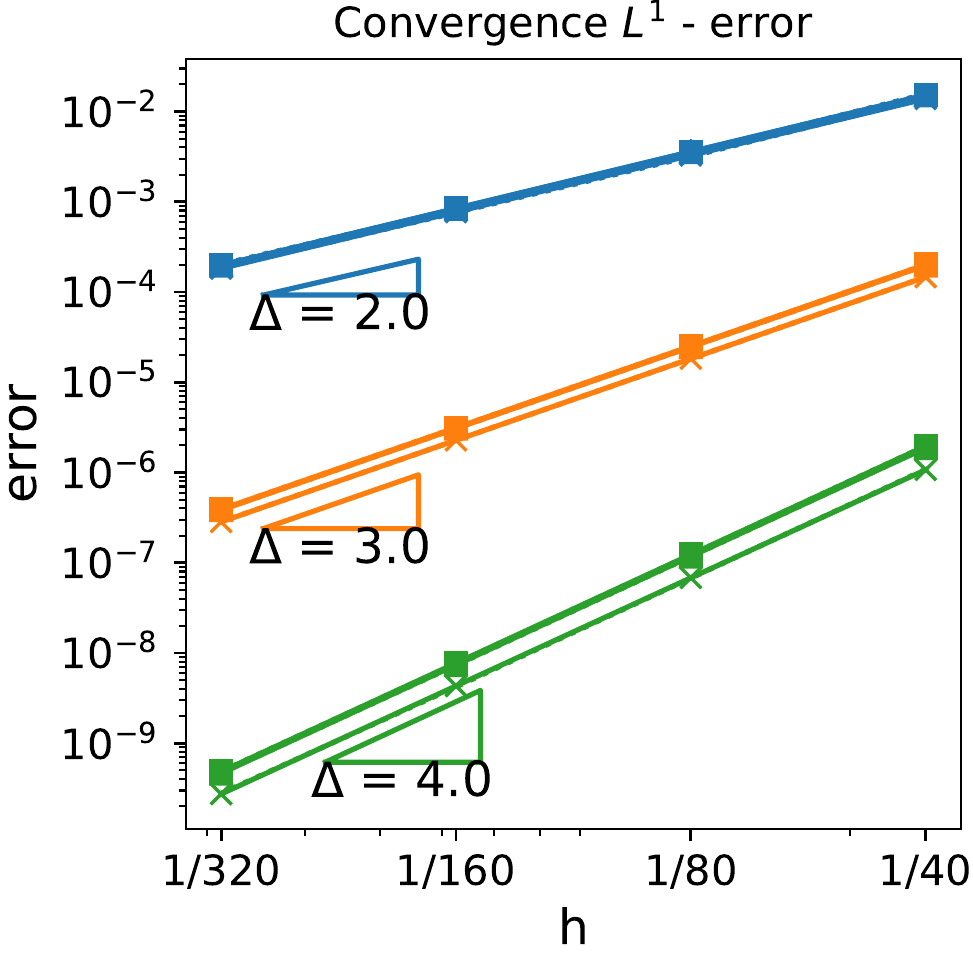}
    \end{subfigure}
    \begin{subfigure}[h]{0.18\linewidth}
    \includegraphics[width=\linewidth]{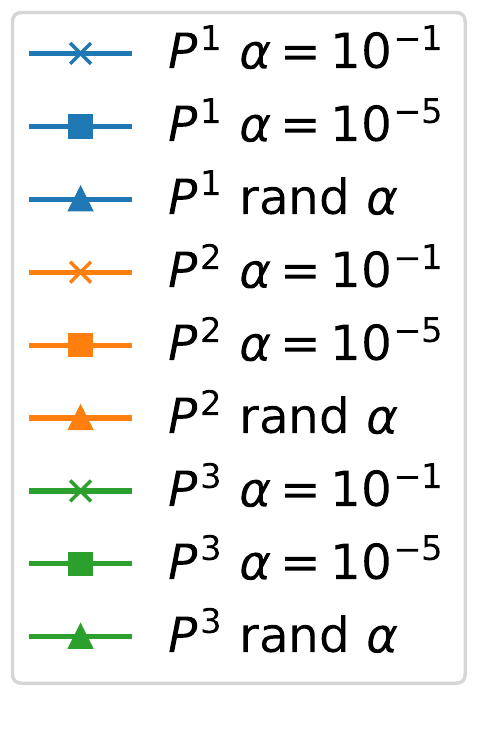}
    \end{subfigure}
    \begin{subfigure}[h]{0.37\linewidth}
    \includegraphics[width=\linewidth]{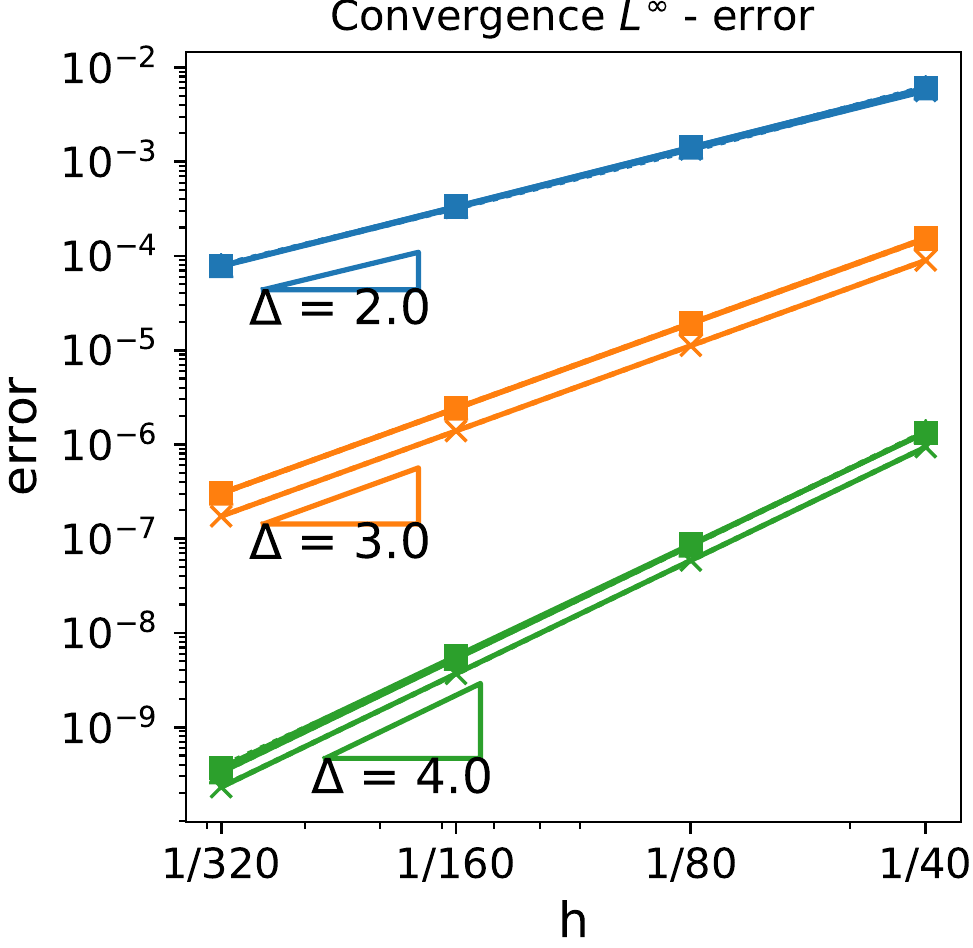}
    \end{subfigure}
    \caption{Convergence test for linear system: Error in the $L^1$ and $L^{\infty}$ norm.}
    \label{fig: err lin system}
\end{figure}
\subsection{Euler equations}
For the Euler equations we present two tests: a test with a smooth manufactured solution and the Sod shock tube test.
\subsubsection{Accuracy test with manufactured solution}
\begin{figure}
    \centering
    \begin{subfigure}[h]{0.37\linewidth}
    \includegraphics[width=\linewidth]{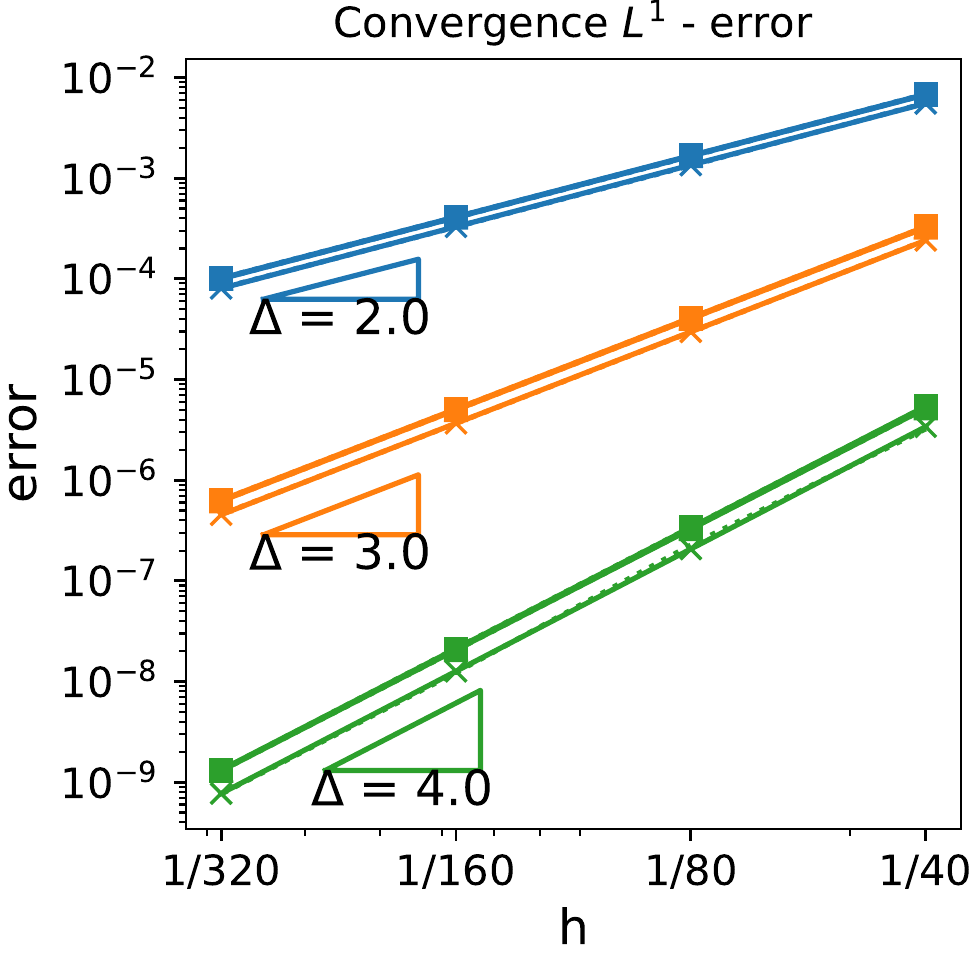}
    \end{subfigure}
    \begin{subfigure}[h]{0.18\linewidth}
    \includegraphics[width=\linewidth]{Kapitel_5/Lin_System/legend.pdf}
    \end{subfigure}
    \begin{subfigure}[h]{0.37\linewidth}
    \includegraphics[width=\linewidth]{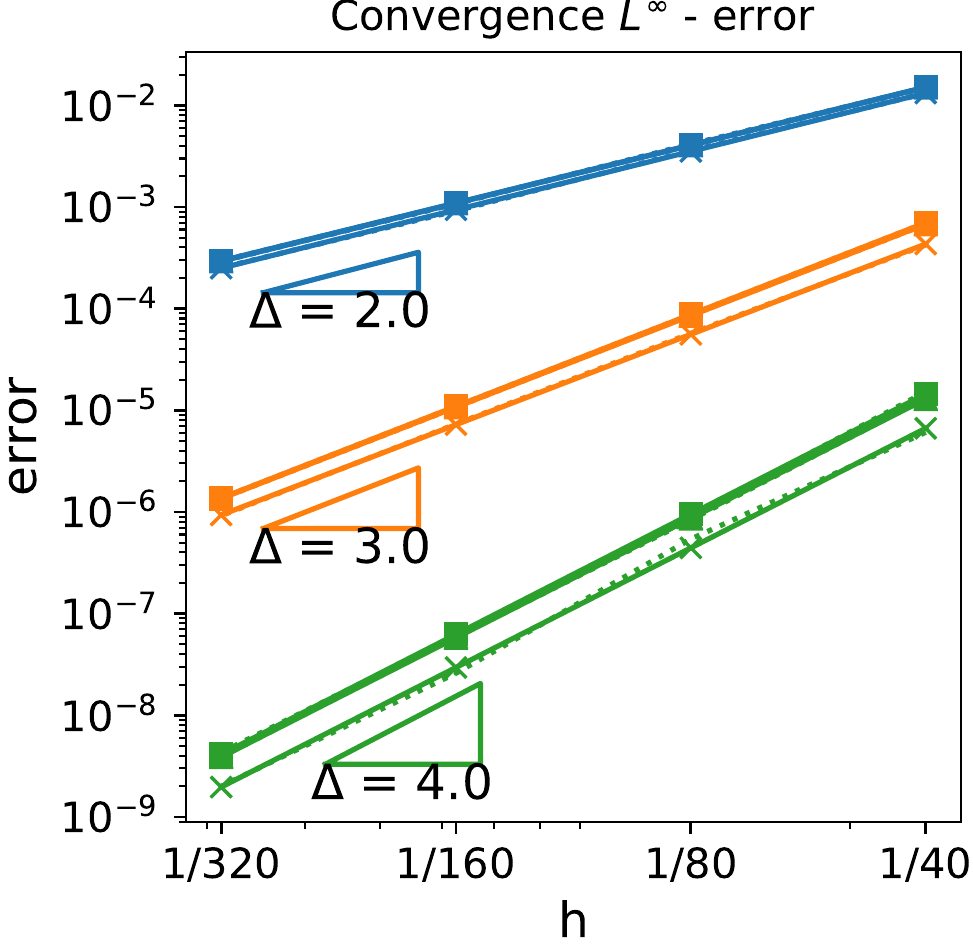}
    \end{subfigure}
    \caption{Convergence test for manufactured solutions for Euler equations: Error in the $L^1$ and $L^{\infty}$ norm.}
        \label{fig: err Euler system}
\end{figure}

We define the solution (in terms of primitive variables) as
\begin{equation*}
    \begin{pmatrix}
    \rho \\ v \\ p
    \end{pmatrix} = \begin{pmatrix}
    2+\sin (2\pi(x-t))\\
    \sin(2\pi(x-t))\\
    2+\cos(2\pi(x-t))
    \end{pmatrix}
\end{equation*}
together with periodic boundary conditions. The source term $\mathbf{g}(x,t)$ can be calculated by inserting the vector of conserved variables $\bfu(x,t)$ into equation \eqref{eq: conservation law with sourceterm} (but it is not given here due to its length). 

In figure \ref{fig: err Euler system} we show the $L^1$ and the $L^\infty$ error for different test cases at time $T=1$. Again, we see optimal convergence rates in the $L^1$ and in the $L^\infty$ norm for the different polynomial degrees.

\subsubsection{Sod shock tube test}
We conclude the numerical results with the well-known Sod shock tube test \cite{Toro}. 
The initial data are given by the following Riemann problem
\begin{equation*}
 \left(\rho, \rho v , E \right) = 
    \begin{cases}
    \left(1, 0, 2.5\right) &\text{ if } x<0,\\
    \left(0.125, 0, 0.25\right) &\text{ otherwise}.
    \end{cases}
\end{equation*}
For this test, we choose $\Omega = (-1,1)$ and use transmissive boundary conditions.
We discretize $\Omega$ with $N=100$ equidistant cells and split every cell in $[-0.75,0.75]$ into a pair of two cut cells with the volume fraction $\alpha_k$ chosen randomly as described above. We set $T=0.4$.

In figure \ref{fig: euler sod p0} we show the solution for density and for velocity at the final time using piecewise constant polynomials. 
As expected for $P^0$, the solution looks good but is quite diffusive.
Figure \ref{fig: euler sod p1} shows the solution for piecewise linear, limited polynomials.
We applied the limiter described in subsection \ref{sec: limiter} to the components of the conserved variables
and added a check to ensure that the pressure stays positive. Compared to the results for $P^0$, the results are significantly less diffusive while mostly being free of oscillations.

\begin{figure}[ht]
\centering
\includegraphics[width=0.65\linewidth]{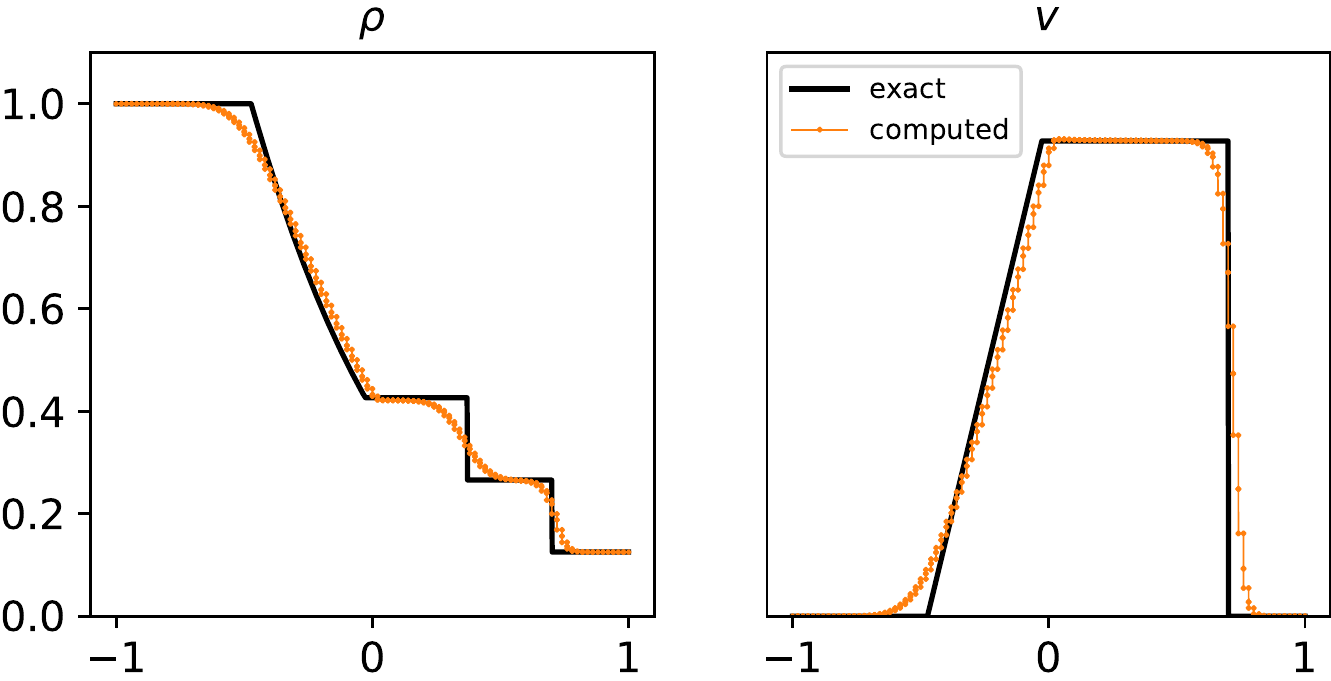}
\caption{Sod shock tube test: Numerical solution for density $\rho$ and velocity $v$ at final time using piecewise constant polynomials.}\label{fig: euler sod p0}
\end{figure}

\begin{figure}[ht]
\centering
\includegraphics[width=0.65\linewidth]{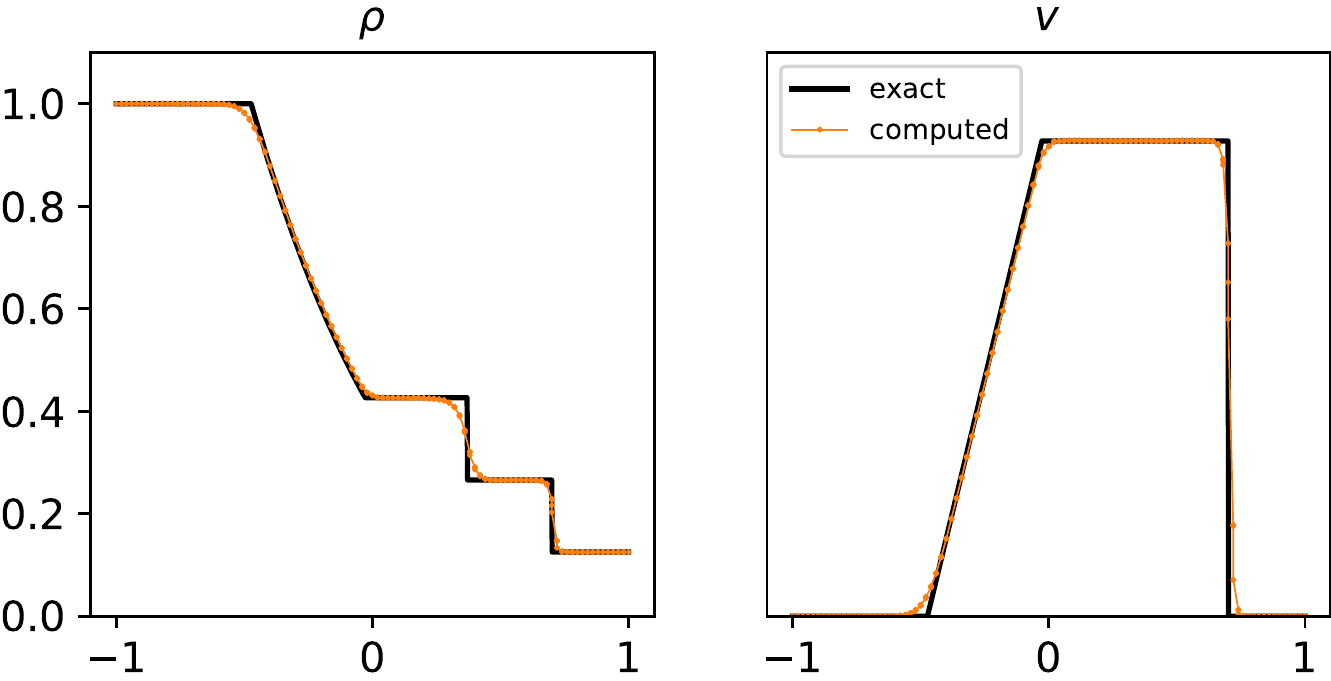}
\caption{Sod shock tube test: Numerical solution for density $\rho$ and velocity $v$ at final time using piecewise linear polynomials with limiter.}\label{fig: euler sod p1}
\end{figure}

\section{Conclusions and outlook}\label{sec: outlook}
In this contribution we have presented the extension of the DoD stabilization to non-linear problems and to higher order polynomials. To account for the latter, we have extended the support of test functions from small cut cells' neighbors into the small cut cells, compare $J_h^{1,\Kone}$ in \eqref{eq: def J1}. This stabilizes the derivatives on the cut cells' neighbors. To account for the changing flow directions in non-linear problems, we make use of Riemann solvers in both $J_h^{0,\Kone}$ and $J_h^{1,\Kone}$. Note also that both penalty terms treat the left and right neighbors of small cut cells in a symmetric way. 

For our new formulation we can show that the fully discrete, first-order scheme is monotone for scalar conservation laws. For the semi-discrete formulation, we have an $L^2$ stability result for arbitrary polynomial degree $p$. Our numerical results 
confirm that the DoD stabilized scheme has the same order of accuracy as standard RKDG schemes on equidistant meshes. Further, we observe robust behavior in the presence of shocks.

The choice of $J_h^{0,\Kone}$ followed in a fairly straightforward way from the choice of $J_h^{0,\Kone}$ for linear advection in \cite{DoD_SIAM_2020} by accounting for the changing flow directions. The design of $J_h^{1,\Kone}$ was significantly more complicated. The goal of ensuring $L^2$ stability has been a major guideline in the development of the terms. 

The next step will be the extension of the formulation to higher dimensions. Here, the main difficulty will consist in extending the penalty term $J_h^{1,\Kone}$ appropriately. Solving a Riemann problem in the interior of a cell in two dimensions is non-trivial. We believe that it will be necessary to replace this formulation by a suitable approximation, similarly to using approximate Riemann solvers instead of exact ones. We also believe that the results presented in this contribution are an essential step and a very good guideline towards reaching that goal.

\section*{Acknowledgments}
The authors would like to thank Christian Engwer, Andrew Giuliani, and Tim Mitchell for helpful discussions.
F.S. gratefully acknowledges support by the Deutsche Forschungsgesellschaft (DFG, German Research Foundation) - 439956613 (Hypercut).




\bibliographystyle{plain}
\bibliography{Literature}

\end{document}